\numberwithin{equation}{section}
\numberwithin{figure}{section}
\theoremstyle{plain}
\newtheorem{thm}{\protect\theoremname}[section]
\theoremstyle{definition}
\newtheorem{example}[thm]{\protect\examplename}
\theoremstyle{remark}
\newtheorem{rem}[thm]{\protect\remarkname}
\theoremstyle{definition}
\newtheorem{defn}[thm]{\protect\definitionname}
\theoremstyle{plain}
\newtheorem{prop}[thm]{\protect\propositionname}
\theoremstyle{plain}
\newtheorem{lem}[thm]{\protect\lemmaname}
\theoremstyle{plain}
\newtheorem{cor}[thm]{\protect\corollaryname}
\theoremstyle{remark}
\newtheorem{claim}[thm]{\protect\claimname}
\subjclass[2020]{14G05, 14M25, 14D23, 14E30, 11G35}
\providecommand{\claimname}{Claim}
\providecommand{\corollaryname}{Corollary}
\providecommand{\definitionname}{Definition}
\providecommand{\examplename}{Example}
\providecommand{\lemmaname}{Lemma}
\providecommand{\propositionname}{Proposition}
\providecommand{\remarkname}{Remark}
\providecommand{\theoremname}{Theorem}
\begin{document}
\title{Orbifold pseudo-effective cones of toric stacks}
\author{Ratko Darda and Takehiko Yasuda}
\address{Faculty of Engineering and Natural Sciences, Sabancı University, Tuzla,
Istanbul, Turkey}
\email{ratko.darda@gmail.com, ratko.darda@sabanciuniv.edu}
\address{Department of Mathematics, Graduate School of Science, the University
of Osaka, Toyonaka, Osaka 560-0043, JAPAN}
\email{yasuda.takehiko.sci@osaka-u.ac.jp}
\begin{abstract}
In this paper, we explicitly describe the orbifold pseudo-effective
cone of a split toric stack.
\end{abstract}

\maketitle
\selectlanguage{english}%
\global\long\def\bigmid{\mathrel{}\middle|\mathrel{}}%

\global\long\def\AA{\mathbb{A}}%

\global\long\def\CC{\mathbb{C}}%

\global\long\def\EE{\mathbb{E}}%

\global\long\def\FF{\mathbb{F}}%

\global\long\def\GG{\mathbb{G}}%

\global\long\def\LL{\mathbb{L}}%

\global\long\def\MM{\mathbb{M}}%

\global\long\def\NN{\mathbb{N}}%

\global\long\def\PP{\mathbb{P}}%

\global\long\def\QQ{\mathbb{Q}}%

\global\long\def\RR{\mathbb{R}}%

\global\long\def\SS{\mathbb{S}}%

\global\long\def\ZZ{\mathbb{Z}}%

\global\long\def\bA{\mathbf{A}}%

\global\long\def\ba{\mathbf{a}}%

\global\long\def\bb{\mathbf{b}}%

\global\long\def\bc{\mathbf{c}}%

\global\long\def\bd{\mathbf{d}}%

\global\long\def\bf{\mathbf{f}}%

\global\long\def\bg{\mathbf{g}}%

\global\long\def\bh{\mathbf{h}}%

\global\long\def\bj{\mathbf{j}}%

\global\long\def\bk{\mathbf{k}}%

\global\long\def\bm{\mathbf{m}}%

\global\long\def\bp{\mathbf{p}}%

\global\long\def\bq{\mathbf{q}}%

\global\long\def\br{\mathbf{r}}%

\global\long\def\bs{\mathbf{s}}%

\global\long\def\bt{\mathbf{t}}%

\global\long\def\bu{\mathbf{u}}%

\global\long\def\bv{\mathbf{v}}%

\global\long\def\bw{\mathbf{w}}%

\global\long\def\bx{\boldsymbol{x}}%

\global\long\def\by{\boldsymbol{y}}%

\global\long\def\bz{\mathbf{z}}%

\global\long\def\cA{\mathcal{A}}%

\global\long\def\cB{\mathcal{B}}%

\global\long\def\cC{\mathcal{C}}%

\global\long\def\cD{\mathcal{D}}%

\global\long\def\cE{\mathcal{E}}%

\global\long\def\cF{\mathcal{F}}%

\global\long\def\cG{\mathcal{G}}%

\global\long\def\cH{\mathcal{H}}%

\global\long\def\cI{\mathcal{I}}%

\global\long\def\cJ{\mathcal{J}}%

\global\long\def\cK{\mathcal{K}}%

\global\long\def\cL{\mathcal{L}}%

\global\long\def\cM{\mathcal{M}}%

\global\long\def\cN{\mathcal{N}}%

\global\long\def\cO{\mathcal{O}}%

\global\long\def\cP{\mathcal{P}}%

\global\long\def\cQ{\mathcal{Q}}%

\global\long\def\cR{\mathcal{R}}%

\global\long\def\cS{\mathcal{S}}%

\global\long\def\cT{\mathcal{T}}%

\global\long\def\cU{\mathcal{U}}%

\global\long\def\cV{\mathcal{V}}%

\global\long\def\cW{\mathcal{W}}%

\global\long\def\cX{\mathcal{X}}%

\global\long\def\cY{\mathcal{Y}}%

\global\long\def\cZ{\mathcal{Z}}%

\global\long\def\fa{\mathfrak{a}}%

\global\long\def\fb{\mathfrak{b}}%

\global\long\def\fc{\mathfrak{c}}%

\global\long\def\fd{\mathfrak{d}}%

\global\long\def\fe{\mathfrak{e}}%

\global\long\def\ff{\mathfrak{f}}%

\global\long\def\fj{\mathfrak{j}}%

\global\long\def\fk{\mathfrak{k}}%

\global\long\def\fn{\mathfrak{n}}%

\global\long\def\fm{\mathfrak{m}}%

\global\long\def\fp{\mathfrak{p}}%

\global\long\def\fs{\mathfrak{s}}%

\global\long\def\ft{\mathfrak{t}}%

\global\long\def\fx{\mathfrak{x}}%

\global\long\def\fv{\mathfrak{v}}%

\global\long\def\fC{\mathfrak{C}}%

\global\long\def\fD{\mathfrak{D}}%

\global\long\def\fF{\mathfrak{F}}%

\global\long\def\fJ{\mathfrak{J}}%

\global\long\def\fG{\mathfrak{G}}%

\global\long\def\fK{\mathfrak{K}}%

\global\long\def\fM{\mathfrak{M}}%

\global\long\def\fN{\mathfrak{N}}%

\global\long\def\fO{\mathfrak{O}}%

\global\long\def\fS{\mathfrak{S}}%

\global\long\def\fV{\mathfrak{V}}%

\global\long\def\fX{\mathfrak{X}}%

\global\long\def\fY{\mathfrak{Y}}%

\global\long\def\ru{\mathrm{u}}%

\global\long\def\rv{\mathbf{\mathrm{v}}}%

\global\long\def\rw{\mathrm{w}}%

\global\long\def\rx{\mathrm{x}}%

\global\long\def\ry{\mathrm{y}}%

\global\long\def\rz{\mathrm{z}}%

\global\long\def\AdGp{\mathrm{AdGp}}%

\global\long\def\Aff{\mathbf{Aff}}%

\global\long\def\Alg{\mathbf{Alg}}%

\global\long\def\age{\operatorname{age}}%

\global\long\def\Ann{\mathrm{Ann}}%

\global\long\def\Aut{\operatorname{Aut}}%

\global\long\def\B{\operatorname{\mathrm{B}}}%

\global\long\def\Bl{\mathrm{Bl}}%

\global\long\def\Box{\mathrm{Box}}%

\global\long\def\bSigma{\mathbf{\Sigma}}%

\global\long\def\C{\operatorname{\mathrm{C}}}%

\global\long\def\calm{\mathrm{calm}}%

\global\long\def\can{\mathrm{can}}%

\global\long\def\center{\mathrm{center}}%

\global\long\def\characteristic{\operatorname{char}}%

\global\long\def\cjun{c\textrm{-jun}}%

\global\long\def\codim{\operatorname{codim}}%

\global\long\def\Coker{\mathrm{Coker}}%

\global\long\def\Conj{\operatorname{Conj}}%

\global\long\def\cw{\mathbf{cw}}%

\global\long\def\D{\mathrm{D}}%

\global\long\def\Df{\mathrm{Df}}%

\global\long\def\dec{\mathrm{dec}}%

\global\long\def\det{\operatorname{det}}%

\global\long\def\diag{\mathrm{diag}}%

\global\long\def\discrep#1{\mathrm{discrep}\left(#1\right)}%

\global\long\def\dom{\mathrm{dom}}%

\global\long\def\doubleslash{\sslash}%

\global\long\def\E{\operatorname{E}}%

\global\long\def\Emb{\operatorname{Emb}}%

\global\long\def\et{\textrm{ét}}%

\global\long\def\etop{\mathrm{e}_{\mathrm{top}}}%

\global\long\def\el{\mathrm{e}_{l}}%

\global\long\def\Exc{\mathrm{Exc}}%

\global\long\def\Ext{\operatorname{Ext}}%

\global\long\def\Fal{\mathrm{Fal}}%

\global\long\def\FConj{F\textrm{-}\Conj}%

\global\long\def\Fitt{\operatorname{Fitt}}%

\global\long\def\fMov{\overline{\mathfrak{Mov}}}%

\global\long\def\fPEff{\overline{\mathfrak{Eff}}}%

\global\long\def\fr{\mathrm{fr}}%

\global\long\def\Fr{\mathrm{Fr}}%

\global\long\def\Gal{\operatorname{Gal}}%

\global\long\def\GalGps{\mathrm{GalGps}}%

\global\long\def\GL{\mathrm{GL}}%

\global\long\def\Gor{\mathrm{Gor}}%

\global\long\def\Grass{\mathrm{Grass}}%

\global\long\def\gw{\mathbf{gw}}%

\global\long\def\H{\operatorname{\mathrm{H}}}%

\global\long\def\hattimes{\hat{\times}}%

\global\long\def\hatotimes{\hat{\otimes}}%

\global\long\def\Hilb{\mathrm{Hilb}}%

\global\long\def\Hodge{\mathrm{Hodge}}%

\global\long\def\Hom{\operatorname{Hom}}%

\global\long\def\hyphen{\textrm{-}}%

\global\long\def\I{\operatorname{\mathrm{I}}}%

\global\long\def\id{\mathrm{id}}%

\global\long\def\Image{\operatorname{\mathrm{Im}}}%

\global\long\def\ind{\mathrm{ind}}%

\global\long\def\injlim{\varinjlim}%

\global\long\def\Inn{\mathrm{Inn}}%

\global\long\def\iper{\mathrm{iper}}%

\global\long\def\Iso{\operatorname{Iso}}%

\global\long\def\isoto{\xrightarrow{\sim}}%

\global\long\def\J{\operatorname{\mathrm{J}}}%

\global\long\def\Jac{\mathrm{Jac}}%

\global\long\def\kConj{k\textrm{-}\Conj}%

\global\long\def\KConj{K\textrm{-}\Conj}%

\global\long\def\Ker{\operatorname{Ker}}%

\global\long\def\Kzero{\operatorname{K_{0}}}%

\global\long\def\lc{\mathrm{lc}}%

\global\long\def\lcr{\mathrm{lcr}}%

\global\long\def\lcm{\operatorname{\mathrm{lcm}}}%

\global\long\def\length{\operatorname{\mathrm{length}}}%

\global\long\def\M{\operatorname{\mathrm{M}}}%

\global\long\def\MC{\mathrm{MC}}%

\global\long\def\MHS{\mathbf{MHS}}%

\global\long\def\mld{\mathrm{mld}}%

\global\long\def\mod#1{\pmod{#1}}%

\global\long\def\Mov{\overline{\mathrm{Mov}}}%

\global\long\def\mRep{\mathbf{mRep}}%

\global\long\def\mult{\mathrm{mult}}%

\global\long\def\N{\operatorname{\mathrm{N}}}%

\global\long\def\Nef{\mathrm{Nef}}%

\global\long\def\nor{\mathrm{nor}}%

\global\long\def\nr{\mathrm{nr}}%

\global\long\def\NS{\mathrm{NS}}%

\global\long\def\op{\mathrm{op}}%

\global\long\def\orb{\mathrm{orb}}%

\global\long\def\ord{\operatorname{ord}}%

\global\long\def\P{\operatorname{P}}%

\global\long\def\PEff{\overline{\mathrm{Eff}}}%

\global\long\def\PGL{\mathrm{PGL}}%

\global\long\def\pt{\mathbf{pt}}%

\global\long\def\pur{\mathrm{pur}}%

\global\long\def\perf{\mathrm{perf}}%

\global\long\def\perm{\mathrm{perm}}%

\global\long\def\Pic{\mathrm{Pic}}%

\global\long\def\pr{\mathrm{pr}}%

\global\long\def\Proj{\operatorname{Proj}}%

\global\long\def\projlim{\varprojlim}%

\global\long\def\Qbar{\overline{\QQ}}%

\global\long\def\QConj{\mathbb{Q}\textrm{-}\Conj}%

\global\long\def\R{\operatorname{\mathrm{R}}}%

\global\long\def\Ram{\operatorname{\mathrm{Ram}}}%

\global\long\def\rank{\operatorname{\mathrm{rank}}}%

\global\long\def\rat{\mathrm{rat}}%

\global\long\def\Ref{\mathrm{Ref}}%

\global\long\def\rig{\mathrm{rig}}%

\global\long\def\rj{\mathrm{rj}}%

\global\long\def\red{\mathrm{red}}%

\global\long\def\reg{\mathrm{reg}}%

\global\long\def\rep{\mathrm{rep}}%

\global\long\def\Rep{\mathbf{Rep}}%

\global\long\def\sbrats{\llbracket s\rrbracket}%

\global\long\def\Sch{\mathbf{Sch}}%

\global\long\def\sep{\mathrm{sep}}%

\global\long\def\Set{\mathbf{Set}}%

\global\long\def\sing{\mathrm{sing}}%

\global\long\def\SL{\mathrm{SL}}%

\global\long\def\sm{\mathrm{sm}}%

\global\long\def\small{\mathrm{small}}%

\global\long\def\Sp{\operatorname{Sp}}%

\global\long\def\Spec{\operatorname{Spec}}%

\global\long\def\Spf{\operatorname{Spf}}%

\global\long\def\ss{\mathrm{ss}}%

\global\long\def\st{\mathrm{st}}%

\global\long\def\Stab{\operatorname{Stab}}%

\global\long\def\Supp{\operatorname{Supp}}%

\global\long\def\spars{\llparenthesis s\rrparenthesis}%

\global\long\def\Sym{\mathrm{Sym}}%

\global\long\def\T{\operatorname{T}}%

\global\long\def\tame{\mathrm{tame}}%

\global\long\def\tbrats{\llbracket t\rrbracket}%

\global\long\def\top{\mathrm{top}}%

\global\long\def\tors{\mathrm{tors}}%

\global\long\def\tpars{\llparenthesis t\rrparenthesis}%

\global\long\def\tor{\mathrm{tor}}%

\global\long\def\Tr{\mathrm{Tr}}%

\global\long\def\ulAut{\operatorname{\underline{Aut}}}%

\global\long\def\ulHom{\operatorname{\underline{Hom}}}%

\global\long\def\ulInn{\operatorname{\underline{Inn}}}%

\global\long\def\ulIso{\operatorname{\underline{{Iso}}}}%

\global\long\def\ulSpec{\operatorname{\underline{{Spec}}}}%

\global\long\def\Utg{\operatorname{Utg}}%

\global\long\def\utg{\mathrm{utg}}%

\global\long\def\Unt{\operatorname{Unt}}%

\global\long\def\Var{\mathbf{Var}}%

\global\long\def\Vol{\mathrm{Vol}}%

\global\long\def\wt{\mathrm{wt}}%

\global\long\def\Y{\operatorname{\mathrm{Y}}}%
\selectlanguage{american}%

\section{Introduction}

In \cite{darda2024thebatyrevtextendashmanin}, we introduced the notion
of the orbifold pseudo-effective cones in order to generalize the
Batyrev--Manin conjecture to Deligne--Mumford stacks. For varieties,
this notion reduces to the notion of usual pseudo-effective cones.
In \cite{darda2023themanin}, we verified the conjecture in the case
where the stack in question is a split toric stack and the height
function is the one associated to the anti-canonical raised line bundle.
In the same paper, we presented a candidate description of the orbifold
pseudo-effective cone of a split toric stack \cite[Remark 3.4.2]{darda2023themanin},
however we could not verify that the description was correct. The
aim of this paper is to prove that it is indeed the correct description
of the orbifold pseudo-effective cone. 

To state our main result more precisely, let us first recall how we
can describe the pseudo-effective cone of a complete smooth toric
variety. In what follows, we work over a field $k$ of characteristic
zero. Let $\Sigma$ be a complete smooth fan in $N_{\RR}=N\otimes\RR$,
where $N$ is a free abelian group of finite rank. Let $X$ be the
toric variety associated to $\Sigma$ over $k$ and let $E_{\rho}$
be the torus-invariant prime divisors on $X$ corresponding to one-dimensional
cones $\rho\in\Sigma(1)$ respectively, where $\Sigma(1)$ denotes
the set of one-dimensional cones in $\Sigma$. The pseudo-effective
cone $\PEff(X)$ of $X$, which is a cone in the Néron--Severi space
$\N^{1}(X)$, is the cone generated by the numerical classes $[E_{\rho}]$,
$\rho\in\Sigma(1)$ of torus-invariant prime divisors. 

Next, consider a stacky fan $\bSigma=(\Sigma,N,\beta)$ in the sense
of Borisov--Chen--Smith \cite{borisov2004theorbifold} and the associated
toric stack $\cX$ over $k$. We again assume that the fan $\Sigma$
is complete. So, $\cX$ is a proper and smooth Deligne--Mumford stack
over $k$. Note that $\cX$ is a so-called split toric stack, namely
the stacky torus contained in $\cX$ as an open substack is isomorphic
to $(\GG_{m})^{d}\times\prod_{i=1}^{s}\B\mu_{l_{i}}$ over $k$, where
$d$ is the dimension of $\cX$ and $l_{1},\dots,l_{s}$ are positive
integers. The orbifold pseudo-effective cone of $\cX$, denoted by
$\PEff_{\orb}(\cX)$, is a cone in the orbifold Néron--Severi space
of $\cX$, a modification of the usual Néron--Severi space by incorporating
twisted sectors. Let $\pi_{0}^{*}(\cJ_{0}\cX)$ be the finite set
of twisted sectors of $\cX$. The orbifold Néron--Severi space is
defined as 
\[
\N_{\orb}^{1}(\cX)=\N^{1}(\cX)\oplus\bigoplus_{\cY\in\pi_{0}^{*}(\cJ_{0}\cX)}\RR u_{\cY},
\]
where $\N^{1}(\cX)$ is the usual Néron--Severi space and $u_{\cY}$
are indeterminants corresponding to twisted sectors $\cY$ respectively.
Note that there is a one-to-one correspondence between twisted sectors
and nonzero elements of a certain subset of the finitely generated
abelian group $N$ (an input of the stacky fan $\bSigma$) called
the box. Using this correspondence and the structure of the stacky
fan, we can associate a non-negative rational number $a_{\rho}(\cY)$
to each pair $(\rho,\cY)$ of $\rho\in\Sigma(1)$ and $\cY\in\pi_{0}^{*}(\cJ_{0}\cX)$
in an explicit way. The main result of this paper is stated as follows:
\begin{thm}[Corollary \ref{cor:main}]
\label{thm:main-intro}Let $\cE_{\rho}\subset\cX$ be the torus-invariant
prime divisors on $\cX$ corresponding to $\rho\in\Sigma(1)$ respectively
and let $[\cE_{\rho}]$ denote their numerical classes. Then, the
orbifold pseudo-effective cone $\PEff_{\orb}(\cX)$ of $\cX$ is generated
by the $\#(\Sigma(1)\cup\pi_{0}^{*}(\cJ_{0}\cX))$ elements 
\[
[\cE_{\rho}]-\sum_{\cY\in\pi_{0}^{*}(\cJ_{0}\cX)}a_{\rho}(\cY)u_{\cY}\quad(\rho\in\Sigma(1))
\]
and 
\[
u_{\cY}\quad(\cY\in\pi_{0}^{*}(\cJ_{0}\cX)).
\]
\end{thm}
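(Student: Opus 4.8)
Denote by $C\subseteq\N_{\orb}^{1}(\cX)$ the convex cone generated by the two families of classes in the statement; the plan is to prove the inclusions $C\subseteq\PEff_{\orb}(\cX)$ and $\PEff_{\orb}(\cX)\subseteq C$ separately, working throughout with the Cox presentation $\cX=[Z/G]$, where $Z\subseteq\AA^{\Sigma(1)}$ is open with homogeneous coordinates $x_{\rho}$ ($\rho\in\Sigma(1)$) and $G$ is a diagonalizable group whose character group is $\Pic(\cX)$; under this presentation the effective divisor classes on $\cX$ are exactly the weights of monomials $\prod_{\rho}x_{\rho}^{m_{\rho}}$ ($m_{\rho}\in\ZZ_{\ge0}$) in the Cox ring. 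Recall that $\PEff_{\orb}(\cX)$ is the closed convex cone generated by the orbifold numerical classes of effective raised divisors on $\cX$ (see \cite{darda2024thebatyrevtextendashmanin}). The decisive input will be the identification of the orbifold order of vanishing of the coordinate section $x_{\rho}$ along a twisted sector $\cY$ with the number $a_{\rho}(\cY)$: by construction $a_{\rho}(\cY)$ is the coefficient of $\beta(e_{\rho})$ when the box element $v_{\cY}$ attached to $\cY$ is written, inside the unique cone of $\Sigma$ containing it in its relative interior, as a non-negative rational combination of the generators $\beta(e_{\rho'})$, and in a local toric chart $[\AA^{d}/H]$ of $\cX$ along $\cE_{\rho}\cap\cY$ this is precisely the weight by which the automorphism determined by $v_{\cY}$ acts on the coordinate cutting out $\cE_{\rho}$. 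Granting this, an effective raised divisor supported on $\sum_{\rho}m_{\rho}\cE_{\rho}$ has orbifold numerical class $\sum_{\rho}m_{\rho}[\cE_{\rho}]+\sum_{\cY}c_{\cY}u_{\cY}$, where compatibility of the raising with the defining section $\prod_{\rho}x_{\rho}^{m_{\rho}}$ forces $c_{\cY}\ge-\sum_{\rho}m_{\rho}a_{\rho}(\cY)$ for every $\cY$, and conversely every class of this form is realized.

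Given the previous paragraph, the inclusion $C\subseteq\PEff_{\orb}(\cX)$ is immediate: taking $\sum_{\rho'}m_{\rho'}\cE_{\rho'}=0$ with raising $c_{\cY_{0}}=1$ along a chosen sector $\cY_{0}$ and $0$ elsewhere shows $u_{\cY_{0}}\in\PEff_{\orb}(\cX)$, and taking $m_{\rho'}=\delta_{\rho\rho'}$ with the extreme admissible raising $c_{\cY}=-a_{\rho}(\cY)$ shows $[\cE_{\rho}]-\sum_{\cY}a_{\rho}(\cY)u_{\cY}\in\PEff_{\orb}(\cX)$. Since a cone generated by finitely many rational vectors is closed, $C\subseteq\PEff_{\orb}(\cX)$.

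For the reverse inclusion, let $\overline{D}$ be any effective raised divisor; it suffices to show $[\overline{D}]\in C$, since then the cone generated by such classes lies in the closed cone $C$, and hence so does its closure. The defining section of the underlying divisor $D$ pulls back to a $G$-homogeneous element $f$ of the Cox ring of weight $[D]$; choose any monomial $\prod_{\rho}x_{\rho}^{m_{\rho}}$ occurring in $f$, so that $m_{\rho}\in\ZZ_{\ge0}$ and $[D]=\sum_{\rho}m_{\rho}[\cE_{\rho}]$. Since the orbifold order of vanishing of $f$ along $\cY$ is the minimum of those of its monomials, it is at most $\sum_{\rho}m_{\rho}a_{\rho}(\cY)$, whence the raising $c_{\cY}$ of $\overline{D}$ satisfies $c_{\cY}\ge-\sum_{\rho}m_{\rho}a_{\rho}(\cY)$. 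Therefore
\[
[\overline{D}]=\sum_{\rho}m_{\rho}\Bigl([\cE_{\rho}]-\sum_{\cY}a_{\rho}(\cY)u_{\cY}\Bigr)+\sum_{\cY}\Bigl(c_{\cY}+\sum_{\rho}m_{\rho}a_{\rho}(\cY)\Bigr)u_{\cY},
\]
and all coefficients on the right are non-negative, so $[\overline{D}]\in C$. This yields $\PEff_{\orb}(\cX)\subseteq C$ and hence the claimed equality.

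The main obstacle is the decisive input of the first paragraph, which has two facets. One is foundational: one must pin down, from the definitions in \cite{darda2024thebatyrevtextendashmanin}, what the orbifold numerical class of an effective raised divisor is and what constraint the raising must satisfy, and in particular isolate the correct notion of ``order of vanishing along a twisted sector''. The other is a local computation on the inertia stack of $\cX$: in the toric charts along each $\cE_{\rho}$ one traces through Gale duality and the structure of the stacky fan $\bSigma$ to see that the cyclic‑group weights governing these orders are exactly the coefficients $a_{\rho}(\cY)$. Once both are in place, the convex‑geometric bookkeeping above is routine, and the reduction to torus‑invariant data is absorbed into the single choice of a monomial occurring in the defining section.
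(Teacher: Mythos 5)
Your approach is genuinely different from the paper's: you argue on the divisor side in the Cox presentation, controlling raisings by local orbifold vanishing orders, whereas the paper never works with effective divisors at all---it computes the dual cone $\Mov_{1,\orb}(\cX)$ explicitly by building stacky rational curves out of one-parameter subgroups (adapting Payne's argument), and only then obtains the description of $\PEff_{\orb}(\cX)$ by dualizing.

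The decisive gap is your starting point. You take as given that $\PEff_{\orb}(\cX)$ is the closed cone generated by orbifold numerical classes of effective raised divisors. That is not the definition used here: in this paper $\PEff_{\orb}(\cX)$ is \emph{defined} as the dual cone of $\Mov_{1,\orb}(\cX)$, the closure of the cone spanned by orbifold numerical classes of covering families of stacky curves. Identifying that dual cone with a closure of an effective raised-divisor cone is a BDPP-type duality statement, not a tautology, and it is neither proved nor cited. This is fatal for your reverse inclusion $\PEff_{\orb}(\cX)\subseteq C$: a general element of the dual cone of $\Mov_{1,\orb}(\cX)$ is merely a class pairing nonnegatively against all movable stacky curve classes, and nothing in your proposal ensures it is a limit of effective raised divisor classes, so the reduction ``let $\overline{D}$ be any effective raised divisor'' does not reach it. The paper closes exactly this gap by showing, for each target class $\sum_i\Xi_{\eta_i}$, that an explicit stacky curve built from the one-parameter subgroups $\phi_{b_{\eta_i}}$ realizes it in $\Mov_{1,\orb}(\cX)$; that construction is the actual content of the theorem, and your proposal does not replace it. You yourself flag the definitions of ``raised divisor'' and ``orbifold vanishing order along $\cY$'' as your ``main obstacle,'' but even after pinning those down, the BDPP-type equivalence would still be unproved---and for toric stacks, proving it is essentially the whole paper.
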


\begin{example}
Let $\cX$ be the weighted projective stack $\cP(2,1)$. This is the
projective line given with a stacky point at $\infty$ with automorphism
group $\mu_{2}$. Thus, there exists a unique twisted sector $\cY$,
lying over $\infty$. The corresponding fan has two one-dimensional
rays $\rho_{0}$ and $\rho_{\infty}$ that corresponds to the points
$0$ and $\infty$, respectively. The orbifold Néron--Severi space
$\N_{\orb}^{1}(\cX)$ is of dimension two and has a basis $[\cE_{\rho_{0}}]=2[\cE_{\rho_{\infty}}]$,
$u_{\cY}$. Theorem \ref{thm:main-intro} says that the orbifold pseudo-effective
cone $\PEff_{\orb}(\cX)$ is generated by the three elements $[\cE_{\rho_{0}}]$,
$[\cE_{\rho_{\infty}}]-\frac{1}{2}u_{\cY}$, and $u_{\cY}$. But,
the first element $[\cE_{\rho_{0}}]$ is obviously redundant and $\PEff_{\orb}(\cX)$
is, in fact, generated by $[\cE_{\rho_{\infty}}]-\frac{1}{2}u_{\cY}$,
and $u_{\cY}$.
\end{example}

Our strategy of the proof is to describe the dual cone of $\PEff_{\orb}(\cX)$,
denoted by $\Mov_{1,\orb}(\cX)$. It is defined to be the closure
of the cone generated by the orbifold numerical classes of covering
families of stacky curves. A difficult part in getting a desired description
of $\Mov_{1,\orb}(\cX)$ is to construct a covering family of stacky
curves that induces a prescribed orbifold numerical class. We will
do this by adapting Payne's argument in \cite{payne2006stablebase}
for our stacky setting; we construct a stacky curve having a desired
numerical property by combining several one-parameter subgroups in
an appropriate way. 

Note that Theorem \ref{thm:main-intro} holds over an arbitrary field
of characteristic zero. However, we may assume that the base field
is algebraically closed, since the action of the absolute Galois group
of $k$ on $\N_{\orb}^{1}(\cX)$ is trivial (since $\cX$ is a \emph{split}
toric stack) and hence the orbifold pseudo-effective cone does not
change by passing to an algebraic closure of the base field. For this
reason, we will assume that the base field $k$ is algebraically closed
in the text of the paper. This simplifies our notation, since we can
then focus on $k$-points of schemes and stacks at most parts of arguments
rather than treating $L$-points for various extensions $L/k$.

\subsection*{Acknowledgements}

We would like to thank Sho Tanimoto for helpful discussions. Ratko
Darda has received a funding from the European Union’s Horizon 2023
research and innovation programme under the Maria Skłodowska-Curie
grant agreement 101149785. Takehiko Yasuda was supported by JSPS KAKENHI
Grant Numbers JP21H04994, JP23H01070 and JP24K00519.

\section{Toric stacks and stacky fans}

From now on, we assume that $k$ is an algebraically closed field
of characteristic zero. However, we note again that Theorem \ref{thm:main-intro}
holds over any field of characteristic zero, as explained in Introduction. 

We recall the notation on toric stacks from \cite{darda2023themanin}
with some modifications. There are several versions of toric stacks
in the literature. We use the version introduced by Borisov--Chen--Smith
\cite{borisov2004theorbifold}. Let $N$ be a finitely generated abelian
group of rank $d$. We fix a decomposition $N=N^{\rig}\oplus N_{\tor}$
of $N$ into a free abelian group $N^{\rig}$ of rank $d$ and a torsion
group $N_{\tor}$. We also fix an isomorphism $N_{\tor}=\prod_{i=1}^{s}\ZZ/l_{i}\ZZ$.
Let $N_{\RR}:=N\otimes\RR$, which is a $\RR$-vector space of dimension
$d$ and naturally identified with $N_{\RR}^{\rig}:=N^{\rig}\otimes\RR$. 

Let $\Sigma$ be a complete simplicial fan in $N_{\RR}$, which corresponds
to a complete, normal and $\QQ$-factorial toric variety $X=X_{\Sigma}$
of dimension $d$ over the base field $k$. Let $\bigoplus_{\rho\in\Sigma(1)}\ZZ\bv_{\rho}$
be a free abelian group of rank $\#\Sigma(1)$ with $\Sigma(1)$ denoting
the set of 1-dimensional cones in $\Sigma$ and let 
\[
\beta\colon\bigoplus_{\rho\in\Sigma(1)}\ZZ\bv_{\rho}\to N
\]
be a homomorphism such that the triple $\bSigma=(\Sigma,N,\beta)$
is a stacky fan in the sense of \cite{borisov2004theorbifold}. Let
$\cX=\cX_{\bSigma}$ be the toric stack corresponding to the stacky
fan $\bSigma$. This is a smooth and proper DM stack of dimension
$d$ over $k$. There exists a natural morphism $\cX\to X$, which
makes $X$ a coarse moduli space of $\cX$. 

Let $\beta^{\rig}$ denote the composition of $\beta$ with the quotient
map $N\to N^{\rig}$. The triple $\bSigma^{\rig}:=(\Sigma,N^{\rig},\beta^{\rig})$
is again a stacky fan. The associated toric stack $\cX^{\rig}:=\cX_{\bSigma^{\rig}}$
has a smooth and proper DM stack which has trivial generic stabilizer.
There exists a natural morphism $\cX\to\cX^{\rig}$, which is the
rigidification of $\cX$ with respect to the dominating components
of the inertia stack $\I\cX$. We also have a natural morphism $\cX^{\rig}\to X$,
which allows us to regard $X$ also as a coarse moduli space of $\cX^{\rig}$.
The composition $\cX\to\cX^{\rig}\to X$ is identical to the morphism
$\cX\to X$ above. 

Let $T\cong\GG_{m}^{d}$ be the algebraic torus corresponding to $N^{\rig}$
and let $\cT=T\times\prod_{i=1}^{s}\B\mu_{l_{i}}$ be the stacky torus
corresponding to $N$. The toric stack $\cX$ contains $\cT$ as an
open substack. From \cite[Definition 3.1 and Theorem 7.24]{fantechi2010smoothtoric},
we have an action of $\cT$ on $\cX$. 

For each $\rho\in\Sigma(1)$, let $b_{\rho}:=\beta^{\rig}(\bv_{\rho})\in N^{\rig}$
and let $w_{\rho}$ be the primitive element of $N^{\rig}\cap\rho$,
where we identify $N^{\rig}$ with a subgroup of $N_{\RR}$ in the
obvious way. There exists a unique $c_{\rho}\in\ZZ_{>0}$ with $b_{\rho}=c_{\rho}w_{\rho}$.
Let $\cE_{\rho},\cE_{\rho}^{\rig},E_{\rho}$ be the prime divisors
on $\cX$, $\cX^{\rig}$ and $X$ respectively that correspond to
$\rho$. In particular, they are irreducible and reduced closed substacks
of $\cX$, $\cX^{\rig}$ and $X$ respectively and there are natural
morphisms $\cE_{\rho}\to\cE_{\rho}^{\rig}\to E_{\rho}$. The pullbacks
of $E_{\rho}$ to $\cX^{\rig}$ and $\cX$ are $c_{\rho}\cE_{\rho}$
and $c_{\rho}\cE_{\rho}^{\rig}$.
\begin{rem}
We should not confuse $\cE_{\rho}^{\rig}$ with the rigidification
of $\cE_{\rho}$ with respect to dominant components of its inertia
stack. Since the stack $\cE_{\rho}^{\rig}$ can have non-trivial generic
stabilizer, it is only the rigidification of $\cE_{\rho}$ with respect
to \emph{some }dominant components of the inertia stack. 
\end{rem}

\section{Sectors and boxes}

The stack of twisted 0-jets of $\cX$, denoted by $\cJ_{0}\cX$, is
the stack parametrizing representable morphisms $\B\mu_{l}\to\cX$,
$l\in\ZZ_{>0}$. It is again a smooth and proper Deligne--Mumford
stack over $k$. The original $\cX$ is regarded as a connected component
of $\cJ_{0}\cX$. Let $\pi_{0}(\cJ_{0}\cX)$ be the set of connected
component of $\cJ_{0}\cX$ and let $\pi_{0}^{*}(\cJ_{0}\cX):=\pi_{0}(\cJ_{0}\cX)\setminus\{\cX\}$.
These are finite sets. An element of $\pi_{0}(\cJ_{0}\cX)$ (resp.
$\pi_{0}^{*}(\cJ_{0}\cX)$) is called a sector (resp. a twisted sector)
of $\cX$. See \cite[Section 2.1]{darda2024thebatyrevtextendashmanin}
for more details.

For $\by\in N^{\rig}$, there are rational numbers $a_{\rho}(\by)\in\QQ_{\ge0}$,
$\rho\in\Sigma(1)$ such that $a_{\rho}(\by)>0$ if and only if $\rho$
is a face of the minimal cone $\sigma\in\Sigma$ containing $\by$
and such that we can write 
\[
\by=\sum_{\rho\in\Sigma(1)}a_{\rho}(\by)b_{\rho}.
\]
See \cite[Section 3.1.1]{darda2023themanin}. We define 
\[
\Box^{\rig}(\Sigma):=\{\by\in N^{\rig}\mid\forall\rho\in\Sigma(1),\,a_{\rho}(\by)<1\}.
\]
We define $\Box(\Sigma)$ to be the preimage of $\Box^{\rig}(\Sigma)$
by the projection $N=N^{\rig}\oplus N_{\tor}\to N^{\rig}$, which
is identified with $\Box^{\rig}(\Sigma)\times N_{\tor}$ in the obvious
way. There are canonical identifications 
\[
\pi_{0}(\cJ_{\infty}\cX)=\Box(\Sigma)\text{ and }\pi_{0}(\cJ_{\infty}\cX^{\rig})=\Box^{\rig}(\Sigma).
\]
See \cite[Proposition 4.7]{borisov2004theorbifold} and \cite[Lemma 3.1.4]{darda2023themanin}.

We have a map 
\[
q\colon N^{\rig}\to\Box^{\rig}(\Sigma),\quad\by=\sum_{\rho\in\Sigma(1)}a_{\rho}(\by)b_{\rho}\mapsto\sum_{\rho\in\Sigma(1)}\{a_{\rho}(\by)\}b_{\rho}.
\]
Here, for a rational number $s$, $\{s\}$ denotes the fractional
part $s-\lfloor s\rfloor$ of $s$. We extend this map to
\[
q\times\id_{G^{D}}\colon N^{\rig}\times G^{D}\to\Box(\Sigma)=\Box^{\rig}(\Sigma)\times G^{D},
\]
which we denote again by $q$.

\section{Exact sequences}

\subsection{``Non-orbifold'' exact sequences}

We first recall two standard exact sequences which are dual to each
other and involve the spaces of divisors and 1-cycles, $M_{\RR}$
and $N_{\RR}$. Consider the following two $\RR$-vector spaces 
\begin{gather*}
U=\bigoplus_{\rho\in\Sigma(1)}\RR\bu_{\rho}\text{ and }V=\bigoplus_{\rho\in\Sigma(1)}\RR\bv_{\rho}.
\end{gather*}
We regard $U$ and $V$ as dual spaces to each other by the pairing
determined by $\langle\bu_{\rho},\bv_{\rho'}\rangle=\delta_{\rho,\rho'}$
(Kronecker's delta). Let 
\[
M:=\Hom(N,\ZZ)=\Hom(N^{\rig},\ZZ)\cong\ZZ^{d}
\]
and let $M_{\RR}:=M\otimes\RR\cong\RR^{d}.$ Let $\N^{1}(X)$ denote
the Néron-Severi space of $X$ defined over $\RR$ and let $\N_{1}(X)$
be the space of numerical classes of $1$-cycles defined over $\RR$.
These are $\RR$-vectors spaces dual to each other and their dimension
is the Picard number of $X$. 

We consider the map 
\[
\alpha\colon M_{\RR}\to U,\quad\bm\mapsto\sum_{\rho\in\Sigma(1)}\langle w_{\rho},\bm\rangle\bu_{\rho}
\]
and the map 
\[
\lambda\colon U\to\N^{1}(X),\quad\bu_{\rho}\mapsto[E_{\rho}],
\]
where $[E_{\rho}]$ is the numerical class of $E_{\rho}$. As is well-known
(for example, see \cite[Propositions 6.3.15 and 6.4.1]{cox2011toricvarieties}),
these maps fit into the short exact sequence:
\begin{equation}
0\to M_{\RR}\xrightarrow{\alpha}U\xrightarrow{\lambda}\N^{1}(X)\to0.\label{eq:ex1}
\end{equation}
Let us consider its dual exact sequence,
\begin{equation}
0\to\N_{1}(X)\xrightarrow{\mu}V\xrightarrow{\beta'}N_{\RR}\to0.\label{eq:ex2}
\end{equation}
Here, the map $\mu$ is given by $\mu(h)=\sum_{\rho\in\Sigma(1)}(\mu,E_{\rho})\bv_{\rho}$
and the map $\beta'$ is given by $\beta'(\bv_{\rho})=w_{\rho}$.
Thus, $\N_{1}(X)$ is identified with $\Ker(\beta')$, which is explicitly
described as
\[
\Ker(\beta')=\left\{ \sum_{\rho\in\Sigma(1)}a_{\rho}\bv_{\rho}\bigmid\sum_{\rho\in\Sigma(1)}a_{\rho}w_{\rho}=0\right\} .
\]

\begin{rem}
Let us denote the map $V\to N_{\RR}$ obtained by tensoring $\beta$
with $\RR$ again by $\beta$ and let $c\colon V\to V$ be the linear
map with $\bv_{\rho}\mapsto c_{\rho}\bv_{\rho}$. Then, $\beta'$
and $\beta$ is related by $\beta=c\circ\beta'$.
\end{rem}

\subsection{``Orbifold'' exact sequences}

In what follows, we often treat elements of $\Sigma(1)$ and $\pi_{0}^{*}(\cJ_{0}\cX)$
equally by denoting them by $\eta$. On the other hand, when we treat
them separately, we denote elements of $\Sigma(1)$ by $\rho$ and
elements of $\pi_{0}^{*}(\cJ_{0}\cX)$ by $\cY$. 

Next we construct the orbifold versions of the exact sequences (\ref{eq:ex1})
and (\ref{eq:ex2}) obtained above. Consider the following two $\RR$-vector
spaces 
\begin{gather*}
U_{\orb}=\bigoplus_{\eta\in\Sigma(1)\cup\pi_{0}^{*}(\cJ_{0}\cX)}\RR\bu_{\eta}\text{ and }V_{\orb}=\bigoplus_{\eta\in\Sigma(1)\cup\pi_{0}^{*}(\cJ_{0}\cX)}\RR\bv_{\eta}.
\end{gather*}
We regard $U_{\orb}$ and $V_{\orb}$ as dual spaces to each other
by the pairing $\langle\bu_{\eta},\bv_{\eta'}\rangle=\delta_{\eta,\eta'}$.
Let 
\[
\N_{\orb}^{1}(\cX):=\N^{1}(X)\oplus\bigoplus_{\cY\in\pi_{0}^{*}(\cJ_{0}\cX)}\RR u_{\cY}
\]
be the orbifold Néron--Severi space defined over $\RR$. Similarly,
we define
\[
\N_{1,\orb}(\cX):=\N_{1}(X)\oplus\bigoplus_{\cY\in\pi_{0}^{*}(\cJ_{0}\cX)}\RR v_{\cY}.
\]
The intersection pairing between $\N_{\orb}^{1}(\cX)$ and $\N_{1,\orb}(\cX)$
is defined as the component-wise pairing. It makes them dual to each
other. Taking the direct sum of (\ref{eq:ex1}) and 
\[
0\to0\to\bigoplus_{\cY\in\pi_{0}^{*}(\cJ_{0}\cX)}\RR\bu_{\cY}\xrightarrow{\bu_{\cY}\mapsto u_{^{\cY}}}\bigoplus_{\cY\in\pi_{0}^{*}(\cJ_{0}\cX)}\RR u_{\cY}\to0
\]
gives the short exact sequence
\[
0\to M_{\RR}\xrightarrow{\alpha_{\orb}}U_{\orb}\xrightarrow{\lambda_{\orb}}\N^{1}(\cX)_{\orb}\to0.
\]
As the dual of the last sequence, we get the short exact sequence
\[
0\to\N_{1,\orb}(\cX)\xrightarrow{\mu_{\orb}}V_{\orb}\xrightarrow{\beta'_{\orb}}N_{\RR}\to0.
\]
The map $\beta'_{\orb}\colon V_{\orb}\to N_{\RR}$ sends $\bv_{\rho}$
to $w_{\rho}$ the primitive lattice vector in $N^{\rig}$ on $\rho$
and $\bv_{\cY}$ to $0$. Thus, $\N_{1,\orb}(\cX)$ is identified
with the kernel of $\beta'_{\orb}$, which is described as
\begin{equation}
\begin{aligned}\Ker(\beta_{\orb}') & =\left\{ \sum_{\eta\in\Sigma(1)\cup\pi_{0}^{*}(\cJ_{0}\cY)}a_{\eta}\bv_{\eta}\in V_{\orb}\bigmid\sum_{\rho\in\Sigma(1)}a_{\rho}w_{\rho}=0\right\} \\
 & =\Ker(\beta')\oplus\bigoplus_{\cY\in\pi_{0}^{*}(\cJ_{0}\cX)}\RR\bv_{\cY}.
\end{aligned}
\label{eq:Ker-orb}
\end{equation}

\section{Orbifold numerical classes of stacky curves}

\subsection{$k\protect\tpars$-points and twisted arcs}

Let $D=\Spec k\tbrats$ be the formal disk. For $l\in\ZZ_{>0}$, let
$\cD^{l}:=[\Spec k\llbracket t^{1/l}\rrbracket/\mu_{l}]$, a twisted
formal disk. There exists a natural morphism $\cD^{l}\to D$ making
$D$ the coarse moduli space of $\cD^{l}$. This morphism is an isomorphism
over the punctured formal disk $D^{*}:=\Spec k\tpars\subset D$. Thus,
we can regard $D^{*}$ also as an open substack of $\cD^{l}$. A \emph{twisted
arc} of $\cX$ is a representable morphism of the form $\cD^{l}\to\cX$
for some $l\in\ZZ_{>0}$. A twisted arc induces a $k\tpars$-point
of $\cX$ as the composition 
\[
D^{*}\hookrightarrow\cD^{l}\to\cX.
\]
Conversely, for every $k\tpars$-point $\gamma\colon D^{*}\to\cX$,
there exists a unique twisted arc $\cD^{l}\to\cX$ inducing $\gamma$.
Thus, we have a one-to-one correspondence between $\cX\langle k\tpars\rangle$
and $(\cJ_{\infty}\cX)\langle k\rangle$. Here the notation $\cX\langle L\rangle$
means the set of isomorphism classes in the groupoid $\cX(L)$ of
$L$-points of the stack $\cX$. 

In what follows, we often identify a $k\tpars$-point with its corresponding
twisted arc and denote them by the same symbol, which is often $\gamma$. 

\subsection{The residue map}
\begin{defn}
Given a twisted arc $\gamma\colon\cD^{l}\to\cX$, composing it with
the natural closed immersion $\B\mu_{l}\hookrightarrow\cD^{l}$, we
get a $k$-point of $\cJ_{0}\cX$. We define the\emph{ sector associated
to} $\gamma$ to be the connected component of $\cJ_{0}\cX$ containing
this $k$-point and denote it by $\cY_{\gamma}$. We also call it
the \emph{sector associated to} $\gamma$. The map 
\[
(\cJ_{\infty}\cX)\langle k\rangle\to\pi_{0}(\cJ_{0}\cX),\quad\gamma\to\cY_{\gamma}
\]
is called the \emph{residue map}. 
\end{defn}

From \cite[\S 2.1]{darda2023themanin}, we have the map 
\[
\log_{\cT}\colon\cT\langle k\tpars\rangle\to N.
\]
Via the identification $\cT=\GG_{m}^{d}\times\prod_{i=1}^{s}\B\mu_{l_{i}}$,
the above map is identified with 
\begin{align*}
(k\tpars^{*})^{d}\times\prod_{i=1}^{s}k\tpars^{*}/(k\tpars^{*})^{l_{i}} & \to\ZZ^{d}\times\prod_{i=1}^{s}\ZZ/l_{i}\ZZ\\
((h_{i})_{1\le i\le d},([g_{i}])_{1\le i\le s}) & \mapsto((\ord h_{i})_{1\le i\le d},([\ord g_{i}])_{1\le i\le s}).
\end{align*}
From \cite[Lemmas 3.1.6 and 3.1.9]{darda2023themanin}, the residue
map $\psi\colon\cT\langle k\tpars\rangle\to\pi_{0}(\cJ_{0}\cX)$ is
identical to 
\[
q\circ\log_{\cT}\colon\cT\langle k\tpars\rangle\to N^{\rig}\times G^{D}\xrightarrow{q}\Box^{\rig}(\Sigma)\times G^{D}=\Box(\Sigma).
\]

\subsection{Orbifold numerical classes}
\begin{defn}
By a \emph{stacky curve }on $\cX$, we mean a representable morphism
$f\colon\cC\to\cX$ such that $\cC$ is an irreducible smooth and
proper DM stack over $k$ having the trivial generic stabilizer. 
\end{defn}

Let $f\colon\cC\to\cX$ be a stacky curve. For each $k$-point $z\in\cC$,
the formal completion $\cC_{z}$ of $\cC$ at $z$ is isomorphic to
$\cD^{l}:=[\Spec k\llbracket t^{1/l}\rrbracket/\mu_{l}]$. The composition
\[
\gamma\colon\cD^{l}\xrightarrow{\sim}\cC_{z}\to\cX
\]
is a twisted arc. The sector $\cY_{\gamma}$ associated to $\gamma$
is independent of the chosen isomorphism and depends only on $f$
and $z$. We call it the \emph{sector of $f$ at $z$ }and denote
it by $\cY_{f,z}$ or just by $\cY_{z}$. 
\begin{defn}[{\cite[Remark 8.9]{darda2024thebatyrevtextendashmanin}}]
\label{def:orb-num-cl}For a stacky curve $f\colon\cC\to\cX$, we
define its \emph{orbifold numerical class} to be 
\[
[f]_{\orb}:=[\overline{f}]+\sum_{\substack{z\in\cC\langle k\rangle\\
\text{stacky points}
}
}v_{\cY_{z}}\in\N_{1,\orb}(\cX),
\]
where $\overline{f}$ is the morphism $\overline{\cC}\to X$ derived
from $f$ with $\overline{\cC}$ the coarse moduli space of $\cC$,
$\cY_{z}$ is the twisted sector associated to $f$ and $x$. 
\end{defn}

\subsection{Localization of orbifold numerical classes}

We will see that when a stacky curve $f\colon\cC\to\cX$ maps the
generic point into $\cT$, then the orbifold numerical class $[f]_{\orb}$
is written as the summation of ``local classes'' $[f_{z}]_{\orb}$,
$z\in\cC\langle k\rangle$. 

For $\rho\in\Sigma(1)$, let $\cI_{\rho}\subset\cO_{\cX}$ be the
coherent ideal sheaf defining the prime divisor $\cE_{\rho}\subset\cX$.
For a twisted arc $\gamma\colon\cD^{l}\to\cX$, consider the composite
morphism
\[
\widetilde{\gamma}\colon\Spec k\llbracket t^{1/l}\rrbracket\to\cD^{l}\to\cX.
\]
The pull-back $(\widetilde{\gamma})^{-1}\cI_{\rho}\subset k\llbracket t^{1/l}\rrbracket$
is an ideal of the form $(t^{n/l})$, $n\in\ZZ_{\ge0}\cup\{\infty\}$.
Here we followed the convention $(t^{\infty})=(0)$. If the corresponding
$k\tpars$-point lies in $\cT$, then $n$ is finite. 
\begin{defn}
We define the \emph{intersection number} of $\gamma$ and $\cE_{\rho}$
to be
\[
(\gamma,\cE_{\rho}):=n/l
\]
with $n$ and $l$ as above. We define $(\gamma,E_{\rho}):=c_{\rho}(\gamma,\cE_{\rho})$.
\end{defn}

\begin{defn}
For $\rho\in\Sigma(1)$, let $\bb_{\rho}:=c_{\rho}\bv_{\rho}\in V_{\orb}$.
We define the \emph{orbifold numerical class }of $\gamma\in\cT(k\tpars)$
to be 
\[
[\gamma]_{\orb}:=\sum_{\rho\in\Sigma(1)}(\gamma,\cE_{\rho})\bb_{\rho}+\bv_{\cY_{\gamma}}=\sum_{\rho\in\Sigma(1)}(\gamma,E_{\rho})\bv_{\rho}+\bv_{\cY_{\gamma}}\in V_{\orb}.
\]
\end{defn}

\begin{prop}
\label{prop:orb-pointwise}Let $f\colon\cC\to\cX$ be a stacky curve
such that the generic point of $\cC$ mapping into $\cT$. Then, we
have 
\[
[f]_{\orb}=\sum_{z\in\cC\langle k\rangle}[f_{z}]_{\orb}.
\]
\end{prop}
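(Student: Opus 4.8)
The plan is to decompose both sides of the asserted equality into their two natural components — the ``non-orbifold'' part living in $\N_{1}(X)$ (equivalently, in $V$ via the identification with $\Ker(\beta')$) and the ``twisted'' part living in $\bigoplus_{\cY}\RR v_{\cY}$ — and to check the equality component by component. On the non-orbifold side, the claim reduces to the classical fact that for a curve $\overline{f}\colon\overline{\cC}\to X$ whose generic point lands in the torus $T$, the numerical class $[\overline{f}]\in\N_{1}(X)$, written in the coordinates of $V$, has $\rho$-component equal to the intersection number $(\overline{f},E_{\rho})=\sum_{z}(\overline{f}_{z},E_{\rho})$, which is a sum of local contributions at the points $z$ where the curve meets $E_{\rho}$; since $(\gamma,E_{\rho})=c_{\rho}(\gamma,\cE_{\rho})$ and $\bb_{\rho}=c_{\rho}\bv_{\rho}$, this matches the $\rho$-component of $\sum_{z}[f_{z}]_{\orb}$. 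On the twisted side, the $v_{\cY}$-component of $[f]_{\orb}$ is, by Definition \ref{def:orb-num-cl}, the number of stacky points $z\in\cC\langle k\rangle$ with $\cY_{z}=\cY$, while the $v_{\cY}$-component of $\sum_{z}[f_{z}]_{\orb}$ is $\#\{z : \cY_{\gamma_{z}}=\cY\}$ where $\gamma_{z}$ is the twisted arc obtained from the formal completion of $\cC$ at $z$; so the two agree provided $\cY_{\gamma_{z}}=\cY_{z}$, which is exactly the definition of the sector of $f$ at $z$, and provided that $\gamma_{z}$ is genuinely a twisted (non-trivial) arc precisely when $z$ is a stacky point, so that the trivial sector $\cX$ never contributes.

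First I would make precise the passage from the global stacky curve $f$ to its local twisted arcs: for each $k$-point $z\in\cC$, the formal completion $\cC_{z}\cong\cD^{l_{z}}$ gives a twisted arc $\gamma_{z}\colon\cD^{l_{z}}\to\cX$, and $\gamma_{z}\in\cT(k\tpars)$ because the generic point of $\cC$ maps into $\cT$ and $\cT$ is open in $\cX$, so the punctured disk $D^{*}\hookrightarrow\cD^{l_{z}}\to\cX$ factors through $\cT$. For all but finitely many $z$ the arc $\gamma_{z}$ has trivial residue (its $k\tpars$-point extends to a $k\tbrats$-point of $\cT$ with unit coordinates), hence $[\gamma_{z}]_{\orb}=0$, so the sum on the right is finite. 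Next I would verify that the intersection numbers behave additively: the ideal $(\overline{f})^{-1}\cI_{E_{\rho}}$ on $\overline{\cC}$ has finite colength away from the finitely many points meeting $E_{\rho}$, and its total colength — which computes $(\overline{f},E_{\rho})=[\overline{f}]\cdot[E_{\rho}]$ by the standard toric intersection theory of \cite{cox2011toricvarieties}, recalled around \eqref{eq:ex2} — is the sum over $z$ of the local colengths $(\overline{f}_{z},E_{\rho})$. Pulling this back through the natural morphism $\cC\to\overline{\cC}$ (which is an isomorphism over the torus locus) and using $(\gamma,E_{\rho})=c_{\rho}(\gamma,\cE_{\rho})$ converts it into the statement about $(\gamma_{z},\cE_{\rho})$ and $\bb_{\rho}$.

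Finally I would assemble the pieces: by \eqref{eq:Ker-orb} the class $[f]_{\orb}$ and each $[f_{z}]_{\orb}$ lie in $\Ker(\beta'_{\orb})=\N_{1,\orb}(\cX)$ — for the local classes this is the identity $\sum_{\rho}(\gamma_{z},E_{\rho})w_{\rho}=0$, which holds because $\gamma_{z}\in\cT(k\tpars)$ so its class is a ``relation'' among the $w_{\rho}$ in the sense of the Cox-variety description — and then comparing $\rho$-components and $\cY$-components separately gives the equality in $V_{\orb}$, hence in $\N_{1,\orb}(\cX)$. The main obstacle I anticipate is the careful bookkeeping in identifying the local intersection contribution $(\overline{f}_{z},E_{\rho})$ with $c_{\rho}(\gamma_{z},\cE_{\rho})$ in a way that is compatible with the global numerical class $[\overline{f}]$ expressed in the coordinates of $V$: one must correctly track the ramification factor $c_{\rho}$ coming from the identification $b_{\rho}=c_{\rho}w_{\rho}$, the difference between pulling back $E_{\rho}$ versus $\cE_{\rho}$ (recall $(\overline{f})^{*}E_{\rho}$ corresponds to $c_{\rho}\cE_{\rho}$ on $\cX$), and the fact that the coarse space map $\cC\to\overline{\cC}$ may ramify at stacky points — but since those points lie over the torus-invariant boundary only when the corresponding arc is non-trivial, and the residue/sector data is precisely what records this, the combinatorics should close up. Everything else is a routine localization argument at the finitely many relevant points of $\cC$.
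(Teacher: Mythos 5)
Your proposal is correct and takes essentially the same route as the paper: the paper's proof simply plugs in the definitions of $[f_z]_{\orb}$ and $[f]_{\orb}$, separates the sum into the $\bv_\rho$-part and the $\bv_{\cY}$-part, identifies the former with $[\overline{f}]$ via the classical additivity of local intersection numbers $\sum_z (f_z, E_\rho) = \overline{f}\cdot E_\rho$, and matches the latter termwise since $\cY_{\gamma_z}=\cY_z$ by definition and the trivial sector contributes zero. Your write-up is considerably more explicit about the finiteness of the sum, the factorization through $\cT$, and the convention that a non-stacky point yields the trivial sector, but these are precisely the unstated steps that the paper's terse three-line computation leaves to the reader; no new ideas or lemmas are introduced beyond those the paper already implicitly uses.
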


\begin{proof}
We first note that since $[f_{z}]_{\orb}=0$ all but finitely many
points $z$, the sum in the proposition is a finite sum. We have
\begin{align*}
\sum_{z\in\cC\langle k\rangle}[f_{z}]_{\orb} & =\sum_{z\in\cC\langle k\rangle}\sum_{\rho\in\Sigma(1)}(f_{z},E_{\rho})\bb_{\rho}+\sum_{\substack{z\in\cC\langle k\rangle\\
\text{stacky points}
}
}\bv_{\cY_{z}}\\
 & =[\overline{f}]+\sum_{\substack{z\in\cC\langle k\rangle\\
\text{stacky points}
}
}\bv_{\cY_{z}}\\
 & =[f]_{\orb}.
\end{align*}
\end{proof}
For $\gamma\in\cT(k\tpars)$, let $u:=\log_{T}(\gamma)\in N^{\rig}$
and let $\sigma\in\Sigma$ be a $d$-dimensional cone containing $u$.
The open substack $\cU(\sigma)\subset\cX$ corresponding to $\sigma$
is of the form $[\AA_{k}^{d}/G_{\sigma}]$ for a finite abelian group
$G_{\sigma}$. From the proof of \cite[3.1.9]{darda2023themanin},
the twisted arc $[\Spec k\llbracket t^{1/l}\rrbracket/\mu_{l}]\to\cX$
that corresponds to $\gamma$ factors as 
\[
[\Spec k\llbracket t^{1/l}\rrbracket/\mu_{l}]\to\cU(\sigma)=[\AA_{k}^{d}/G_{\sigma}].
\]
For a unique embedding $\iota\colon\mu_{l}\hookrightarrow G_{\sigma}$,
there exists a (non-unique) lift of this morphism to an $\iota$-equivariant
morphism
\[
\widetilde{\gamma}\colon\Spec k\llbracket t^{1/l}\rrbracket\to\AA_{k}^{d}.
\]
Let $\rho_{1},\dots,\rho_{d}$ be the one-dimensional faces of $\sigma$,
let $\by\in\Box^{\rig}(\sigma)$ be the image of $u$ and let $a_{\rho_{i}}^{\sigma}(\by)=a_{i}\in\frac{1}{l}\ZZ$.
We have 
\[
(\widetilde{\gamma})^{*}x_{i}=t^{a_{i}}f\quad(f\in k\tbrats).
\]
In particular, 
\[
\ord(\widetilde{\gamma})^{*}x_{i}\in a_{i}+\ZZ_{\ge0}\subset\frac{1}{l}\ZZ_{\ge0}.
\]

With the notation above, if $\rho_{1},\dots,\rho_{d}\in\Sigma(1)$
are such that $\cE_{\rho_{i}}$, $1\le i\le d$, are defined by $x_{i}=0$
respectively, then 
\[
(\gamma,\cE_{\rho_{i}})=\ord(\widetilde{\gamma})^{*}x_{i}=a_{i}+\ord f.
\]
For $\rho\in\Sigma(1)\setminus\{\rho_{1},\dots,\rho_{d}\}$, we have
$(\gamma,\cE_{\rho})=0$. 

With the notation above, we can write
\[
[\gamma]_{\orb}=\sum_{i=1}^{d}a_{i}\bb_{\rho_{i}}+\bv_{\cY_{\gamma}}.
\]
Moreover, if $e$ is the image of $\gamma$ by the composite map
\[
\cT\langle k\tpars\rangle\to N\to N_{\tor},
\]
then the sector $\cY_{\gamma}$ corresponds to the element 
\[
\left(\sum_{i=1}^{d}\{a_{i}\}b_{\rho_{i}},e\right)\in\Box(\Sigma)=\Box^{\rig}(\Sigma)\times N_{\tor}.
\]

\begin{defn}
For each $\eta\in\Sigma(1)\cup\pi_{0}^{*}(\cJ_{0}\cX)$, we define
an element $\Xi_{\eta}\in V_{\orb}$ as follows: 
\[
\Xi_{\eta}:=\begin{cases}
\bb_{\rho} & (\eta=\rho\in\Sigma(1))\\
\bv_{\cY}+\sum_{\rho\in\Sigma(1)}a_{\rho}(\cY)\bb_{\rho} & (\eta=\cY\in\pi_{0}^{*}(\cJ_{0}\cX)).
\end{cases}
\]
For the non-twisted sector $\cX\in\pi_{0}^{*}(\cJ_{0}\cX)$, we put
$\Xi_{\cX}:=0\in V_{\orb}$. 
\end{defn}

\begin{lem}
\label{lem:orb-class-included}For $\gamma\in\cT(k\tpars)$, we have
\[
[\gamma]_{\orb}\in\Xi_{\cY_{\gamma}}+\sum_{\rho\in\Sigma(1)}\ZZ_{\ge0}\Xi_{\rho}\subset\sum_{\eta\in\Sigma(1)\cup\pi_{0}^{*}(\cJ_{0}\cX)}\ZZ_{\ge0}\Xi_{\eta}.
\]
\end{lem}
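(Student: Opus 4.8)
The plan is to unwind the explicit formula for $[\gamma]_{\orb}$ derived just above the statement and match it term by term against the definition of the $\Xi_{\eta}$. Write $u:=\log_T(\gamma)\in N^{\rig}$, let $\sigma\in\Sigma$ be a $d$-dimensional cone containing $u$ with one-dimensional faces $\rho_1,\dots,\rho_d$, and let $a_i:=a_{\rho_i}^\sigma(\by)\in\frac1l\ZZ_{\ge 0}$ where $\by\in\Box^{\rig}(\sigma)$ is the image of $u$. From the displayed formula we have
\[
[\gamma]_{\orb}=\sum_{i=1}^d a_i\bb_{\rho_i}+\bv_{\cY_\gamma},
\]
and $(\gamma,\cE_\rho)=0$ for $\rho\notin\{\rho_1,\dots,\rho_d\}$, so only the $\rho_i$ contribute. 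Decompose $a_i=\lfloor a_i\rfloor+\{a_i\}$ with $\lfloor a_i\rfloor\in\ZZ_{\ge 0}$.

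Next I would identify the fractional parts with the coefficients $a_{\rho_i}(\cY_\gamma)$ appearing in the definition of $\Xi_{\cY_\gamma}$. By the last displayed formula in the section before the definition of $\Xi_\eta$, the sector $\cY_\gamma$ corresponds to the box element $\bigl(\sum_{i=1}^d\{a_i\}b_{\rho_i},e\bigr)\in\Box^{\rig}(\Sigma)\times N_{\tor}$. On the other hand, the rational numbers $a_\rho(\by')$ attached in Section~3 to any $\by'\in N^{\rig}$ are the \emph{unique} coefficients supported on the faces of the minimal cone containing $\by'$ expressing $\by'=\sum_\rho a_\rho(\by')b_\rho$; applying this to $\by'=\sum_i\{a_i\}b_{\rho_i}\in\Box^{\rig}(\Sigma)$ (whose minimal cone is a face of $\sigma$) gives $a_{\rho_i}(\cY_\gamma)=\{a_i\}$ for each $i$ and $a_\rho(\cY_\gamma)=0$ for $\rho\notin\{\rho_1,\dots,\rho_d\}$. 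Hence
\[
\Xi_{\cY_\gamma}=\bv_{\cY_\gamma}+\sum_{\rho\in\Sigma(1)}a_\rho(\cY_\gamma)\bb_\rho=\bv_{\cY_\gamma}+\sum_{i=1}^d\{a_i\}\bb_{\rho_i}.
\]

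Combining the two displays yields
\[
[\gamma]_{\orb}-\Xi_{\cY_\gamma}=\sum_{i=1}^d\bigl(a_i-\{a_i\}\bigr)\bb_{\rho_i}=\sum_{i=1}^d\lfloor a_i\rfloor\,\Xi_{\rho_i}\in\sum_{\rho\in\Sigma(1)}\ZZ_{\ge 0}\,\Xi_\rho,
\]
using $\Xi_{\rho_i}=\bb_{\rho_i}$ and $\lfloor a_i\rfloor\in\ZZ_{\ge 0}$. The final inclusion $\Xi_{\cY_\gamma}+\sum_\rho\ZZ_{\ge 0}\Xi_\rho\subseteq\sum_{\eta}\ZZ_{\ge 0}\Xi_\eta$ is immediate, with the edge case $\cY_\gamma=\cX$ (i.e. $\gamma$ untwisted) handled by the convention $\Xi_{\cX}=0$. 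The only point requiring a little care — the main "obstacle," though a mild one — is the appeal to uniqueness of the coefficients $a_\rho(\,\cdot\,)$ to match $\{a_i\}$ with $a_{\rho_i}(\cY_\gamma)$: one must check that the minimal cone of $\sum_i\{a_i\}b_{\rho_i}$ in $\Sigma$ is indeed a face of $\sigma$, so that the local expansion relative to $\sigma$ coincides with the global one, and that the torsion component $e$ plays no role since $a_\rho$ depends only on the $N^{\rig}$-part.
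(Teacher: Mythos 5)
Your overall route is the same as the paper's: expand $[\gamma]_{\orb}$ in the basis $\{\bb_{\rho},\bv_{\cY}\}$, split the $\bb_{\rho}$-coefficients into integer and fractional parts, identify the fractional-part contribution (together with $\bv_{\cY_{\gamma}}$) with $\Xi_{\cY_{\gamma}}$, and absorb the integer parts into $\sum_{\rho}\ZZ_{\ge0}\Xi_{\rho}$. The paper's proof compresses this into a two-line computation writing $(\gamma,\cE_{\rho})=\{(\gamma,\cE_{\rho})\}+\lfloor(\gamma,\cE_{\rho})\rfloor$ and leaves the identification $\{(\gamma,\cE_{\rho})\}=a_{\rho}(\cY_{\gamma})$ implicit; you helpfully spell that identification out via uniqueness of the box-coefficient expansion, which is the right thing to justify.

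There is, however, a slip in how you set up the coefficients. You define $a_{i}:=a_{\rho_{i}}^{\sigma}(\by)$ with $\by\in\Box^{\rig}(\sigma)$, so by the very definition of the box each $a_{i}$ already lies in $[0,1)$; then $\lfloor a_{i}\rfloor=0$, $\{a_{i}\}=a_{i}$, your decomposition $a_{i}=\lfloor a_{i}\rfloor+\{a_{i}\}$ is vacuous, and the computation collapses to $[\gamma]_{\orb}=\Xi_{\cY_{\gamma}}$ with no $\ZZ_{\ge0}\Xi_{\rho}$ term --- which is false as soon as some $(\gamma,\cE_{\rho_{i}})\ge1$. The quantity you actually need to decompose is the full intersection number $(\gamma,\cE_{\rho_{i}})=a_{i}+\ord f_{i}$ (equivalently the coefficient $a_{\rho_{i}}^{\sigma}(u)$ of $u$ itself rather than of its box image $\by$); its fractional part is $a_{i}=a_{\rho_{i}}(\cY_{\gamma})$ and its floor is the nonnegative integer supplying the $\sum_{\rho}\ZZ_{\ge0}\Xi_{\rho}$ contribution. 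You have inherited this imprecision from the displayed formula $[\gamma]_{\orb}=\sum_{i}a_{i}\bb_{\rho_{i}}+\bv_{\cY_{\gamma}}$ just before the definition of $\Xi_{\eta}$, but the lemma's own proof sidesteps it by working directly with $(\gamma,\cE_{\rho})$; do the same --- replace $a_{i}$ by $(\gamma,\cE_{\rho_{i}})$ throughout, keep your argument that $\{(\gamma,\cE_{\rho_{i}})\}=a_{\rho_{i}}(\cY_{\gamma})$, and the proof goes through.
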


\begin{proof}
We have
\begin{align*}
[\gamma]_{\orb} & =\sum_{\rho\in\Sigma(1)}(\gamma,\cE_{\rho})\bb_{\rho}+\bv_{\cY_{\gamma}}\\
 & =\left(\sum_{\rho\in\Sigma(1)}\{(\gamma,\cE_{\rho})\}\bb_{\rho}+\bv_{\cY_{\gamma}}\right)+\sum_{\rho\in\Sigma(1)}\lfloor(\gamma,E_{\rho})\rfloor\bb_{\rho}\\
 & =\Xi_{\cY_{\gamma}}+\sum_{\rho\in\Sigma(1)}\lfloor(\gamma,E_{\rho})\rfloor\bb_{\rho}.
\end{align*}
which is an element of $\Xi_{\cY_{\gamma}}+\sum_{\rho\in\Sigma(1)}\ZZ_{\ge0}\Xi_{\rho}$.
\end{proof}
\begin{cor}
\label{cor:orb-class}Let $f\colon\cC\to\cX$ be a stacky curve such
that the generic point of $\cC$ mapping into $\cT$. Then, we have
\[
[f]_{\orb}\in\sum_{\eta\in\Sigma(1)\cup\pi_{0}^{*}(\cJ_{0}\cX)}\ZZ_{\ge0}\Xi_{\eta}.
\]
\end{cor}

\begin{proof}
This is a direct consequence of Lemma \ref{lem:orb-class-included}
and Proposition \ref{prop:orb-pointwise}. 
\end{proof}

\subsection{The orbifold class derived by a one-parameter subgroup}

For each $b\in N=N^{\rig}\times N_{\tor}$, we have the associated
morphism 
\[
\phi_{b}\colon\GG_{m}\to\cT.
\]
For $\rho\in\Sigma(1)$, let $b_{\rho}$ denote $\beta(\bv_{\rho})$.
The natural map $k(t)\hookrightarrow k\tpars$ induces the $k\tpars$-point
\[
\widehat{\phi_{b}}\colon\Spec k\tpars\to\GG_{m}\to\cT.
\]

\begin{lem}
We have 
\[
\left[\widehat{\phi_{b}}\right]_{\orb}=\sum_{\rho\in\Sigma(1)}a_{\rho}(b)\bb_{\rho}+\bv_{q(b)}.
\]
\end{lem}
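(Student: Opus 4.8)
The plan is to compute $\left[\widehat{\phi_b}\right]_{\orb}$ directly from the definition $[\gamma]_{\orb}=\sum_{\rho\in\Sigma(1)}(\gamma,\cE_{\rho})\bb_{\rho}+\bv_{\cY_{\gamma}}$ by identifying the two ingredients: the intersection numbers $(\widehat{\phi_b},\cE_{\rho})$ and the sector $\cY_{\widehat{\phi_b}}$. First I would observe that $\log_{T}(\widehat{\phi_b})=b^{\rig}\in N^{\rig}$ (the rigid part of $b$) and that the image of $b$ under $\cT\langle k\tpars\rangle\to N\to N_{\tor}$ is the torsion part $b_{\tor}$; this follows because $\phi_b\colon\GG_m\to\cT$ sends the coordinate $t$ to the one-parameter subgroup attached to $b$, so after the inclusion $k(t)\hookrightarrow k\tpars$ the orders of the coordinate functions are exactly the entries of $b^{\rig}$ and the torsion class is $b_{\tor}$.

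Next I would apply the local description of $[\gamma]_{\orb}$ established just before the Definition of $\Xi_\eta$: choosing a $d$-dimensional cone $\sigma\in\Sigma$ containing $u=\log_T(\widehat{\phi_b})=b^{\rig}$ with one-dimensional faces $\rho_1,\dots,\rho_d$, and writing $\by\in\Box^{\rig}(\sigma)$ for the image of $u$, one has $(\widehat{\phi_b},\cE_{\rho_i})=a_i+\ord f$ for some $f\in k\tbrats$, while $(\widehat{\phi_b},\cE_\rho)=0$ for $\rho\notin\{\rho_1,\dots,\rho_d\}$. The point is that for the one-parameter subgroup the lift $\widetilde{\phi_b}$ can be taken to be a monomial map, so each $\ord\widetilde{\phi_b}^*x_i$ equals precisely $a_{\rho_i}^{\sigma}(\by)$ with no extra $\ord f$ term; hence $(\widehat{\phi_b},\cE_{\rho})=a_{\rho}(b)$ for all $\rho\in\Sigma(1)$ (here one uses that $a_\rho(b)$, defined via $\by=\sum a_\rho(\by)b_\rho$ in $N^{\rig}$, agrees with $a_{\rho_i}^{\sigma}(\by)$ on the faces of $\sigma$ and vanishes otherwise, by the compatibility of the local and global $a_\rho$'s recorded in \cite[Section 3.1.1]{darda2023themanin}). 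For the sector, the description preceding the Definition of $\Xi_\eta$ gives that $\cY_{\widehat{\phi_b}}$ corresponds to $\left(\sum_i\{a_i\}b_{\rho_i},\,b_{\tor}\right)\in\Box^{\rig}(\Sigma)\times N_{\tor}$, which is exactly $q(b)$ by the definition of the map $q\colon N^{\rig}\times G^D\to\Box(\Sigma)$.

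Assembling these, $\left[\widehat{\phi_b}\right]_{\orb}=\sum_{\rho\in\Sigma(1)}(\widehat{\phi_b},\cE_{\rho})\bb_{\rho}+\bv_{\cY_{\widehat{\phi_b}}}=\sum_{\rho\in\Sigma(1)}a_{\rho}(b)\bb_{\rho}+\bv_{q(b)}$, which is the claim. The main obstacle is the careful verification that the lift of $\phi_b$ to the chart $\AA_k^d$ introduces no higher-order corrections, i.e. that $\ord\widetilde{\phi_b}^*x_i=a_i$ exactly rather than $a_i+\ord f$ with $\ord f>0$; this is where one must genuinely use that $\widehat{\phi_b}$ comes from an honest one-parameter subgroup and not an arbitrary $k\tpars$-point, and it is the step most in need of spelling out. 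A secondary, more bookkeeping point is matching the local invariants $a_{\rho_i}^{\sigma}(\by)$ with the global $a_\rho(b)$ and correctly tracking the torsion coordinate through $\log_{\cT}$, but both of these are consequences of the cited lemmas in \cite{darda2023themanin} and should be routine.
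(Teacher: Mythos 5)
Your overall strategy --- compute the intersection numbers $(\widehat{\phi_b},\cE_\rho)$ and the sector $\cY_{\widehat{\phi_b}}$ separately, then substitute into the defining formula for the orbifold class --- is the right one, and it is essentially what the paper does. The paper's own proof is terser: it records only $\log_\cT(\widehat{\phi_b})=b$ and, via the residue-map identity $\psi=q\circ\log_\cT$, the sector $\cY_{\widehat{\phi_b}}=q(b)$, and then writes down the answer. Your attempt to make the intersection-number step explicit is welcome, since the paper leaves it implicit.

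There is, however, an error that makes your argument internally inconsistent. You set $\by\in\Box^{\rig}(\sigma)$ to be the image of $u=b^{\rig}$ under $q$, so $a_i:=a_{\rho_i}^{\sigma}(\by)=\{a_{\rho_i}(b)\}\in[0,1)$ is a fractional part. You then assert that the monomial lift has $\ord\widetilde{\phi_b}^{*}x_i=a_{\rho_i}^{\sigma}(\by)$ ``with no extra $\ord f$ term,'' and from this you conclude $(\widehat{\phi_b},\cE_\rho)=a_\rho(b)$. These two statements contradict each other as soon as some $a_{\rho_i}(b)\ge 1$, since the box coordinate is $\{a_{\rho_i}(b)\}$, not $a_{\rho_i}(b)$ itself. (Already on $\PP^1$, taking $b=2\in N=\ZZ$ gives intersection number $2$ with the boundary divisor at the origin, while the box coordinate is $0$.) The correct statement is that the monomial lift has $\ord\widetilde{\phi_b}^{*}x_i=a_{\rho_i}(b)$, the full coordinate of $b^{\rig}=\sum_{\rho}a_\rho(b)b_\rho$; the ``extra'' $\ord f$ term is then $\lfloor a_{\rho_i}(b)\rfloor$, a genuinely nonnegative integer, typically positive. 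Your parenthetical appeal to ``compatibility of the local and global $a_\rho$'s'' only works if one silently switches the role of $\by$ from $q(u)$ to $u=b^{\rig}$ itself, which is precisely the conflation causing the trouble. Once the exponent is corrected to $a_{\rho_i}(b)$, the identity $(\widehat{\phi_b},\cE_\rho)=a_\rho(b)$ follows, the sector identification $\cY_{\widehat{\phi_b}}=q(b)$ (which you derive correctly) stands, and the stated formula drops out.
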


\begin{proof}
By construction, 
\[
\log_{\cT}(\widehat{\phi_{b}})=b=\left(\sum_{\rho\in\Sigma(1)}a_{\rho}(b)\bb_{\rho},e\right)\in N^{\rig}\times N_{\tor},
\]
with $g\in G^{D}$. Thus, the sector associated to $\widehat{\phi_{b}}$
is 
\[
\cY_{\widehat{\phi_{b}}}=q(b)=\left(\sum_{\rho\in\Sigma(1)}\{a_{\rho}(b)\}\bb_{\rho},e\right)\in\Box(\Sigma).
\]
Thus,
\[
\left[\widehat{\phi_{b}}\right]_{\orb}=\sum_{\rho\in\Sigma(1)}a_{\rho}(b)\bb_{\rho}+\bv_{q(b)}.
\]
\end{proof}
\begin{cor}
\label{cor:Xi}For $\eta\in\Sigma(1)\cup\pi_{0}^{*}(\cJ_{0}\cX)$,
we have $\left[\widehat{\phi_{b_{\eta}}}\right]_{\orb}=\Xi_{\eta}$. 
\end{cor}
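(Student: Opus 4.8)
The plan is to reduce everything to the preceding Lemma by evaluating its formula at $b=b_{\eta}$, and then to treat the cases $\eta=\rho\in\Sigma(1)$ and $\eta=\cY\in\pi_{0}^{*}(\cJ_{0}\cX)$ separately; in each case what remains is simply to identify the coordinates $a_{\rho'}(b_{\eta})$ and the sector $q(b_{\eta})$ explicitly.

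For $\eta=\rho\in\Sigma(1)$ I would use that $b_{\rho}=\beta(\bv_{\rho})$ lies in $N^{\rig}$ on the ray $\rho$, so the minimal cone of $\Sigma$ containing it is $\rho$ itself. Hence $a_{\rho'}(b_{\rho})=0$ for $\rho'\ne\rho$, and the defining identity $b_{\rho}=\sum_{\rho'}a_{\rho'}(b_{\rho})b_{\rho'}$ forces $a_{\rho}(b_{\rho})=1$. In particular every $a_{\rho'}(b_{\rho})$ is an integer, so $q(b_{\rho})=\sum_{\rho'}\{a_{\rho'}(b_{\rho})\}b_{\rho'}=0$; that is, the sector of $\widehat{\phi_{b_{\rho}}}$ is the untwisted one, so $\bv_{q(b_{\rho})}=0$. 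The preceding Lemma then gives $[\widehat{\phi_{b_{\rho}}}]_{\orb}=\sum_{\rho'}a_{\rho'}(b_{\rho})\bb_{\rho'}+\bv_{q(b_{\rho})}=\bb_{\rho}=\Xi_{\rho}$.

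For $\eta=\cY\in\pi_{0}^{*}(\cJ_{0}\cX)$ I would unwind the identification $\pi_{0}(\cJ_{\infty}\cX)=\Box(\Sigma)$: here $b_{\cY}$ is the box element in $\Box(\Sigma)=\Box^{\rig}(\Sigma)\times N_{\tor}$ corresponding to $\cY$, and by definition the numbers $a_{\rho}(\cY)$ entering $\Xi_{\cY}$ are the values of $a_{\rho}$ on the $N^{\rig}$-component of $b_{\cY}$. Since that component lies in $\Box^{\rig}(\Sigma)$, we have $0\le a_{\rho}(\cY)<1$ for every $\rho$, hence $\{a_{\rho}(\cY)\}=a_{\rho}(\cY)$; equivalently $q(b_{\cY})=b_{\cY}$, so the sector associated to $\widehat{\phi_{b_{\cY}}}$ is $\cY$ itself and $\bv_{q(b_{\cY})}=\bv_{\cY}$. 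Substituting into the Lemma yields $[\widehat{\phi_{b_{\cY}}}]_{\orb}=\sum_{\rho}a_{\rho}(\cY)\bb_{\rho}+\bv_{\cY}=\Xi_{\cY}$, which finishes the proof.

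The computations in both cases are immediate once the formula of the Lemma is available; the only step I would present with care is the bookkeeping in the $\cY$ case, namely confirming that the $a_{\rho}(\cY)$ occurring in the definition of $\Xi_{\cY}$ really are the $a_{\rho}(b_{\cY})$ produced by the Lemma. This is exactly the content of the identification of twisted sectors with box elements, together with the fact that $q$ restricts to the identity on $\Box(\Sigma)$.
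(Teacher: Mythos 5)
Your proof is correct and takes the same approach as the paper, whose own proof simply records the corollary as a direct consequence of the preceding lemma; you supply the case-by-case bookkeeping that the paper leaves implicit. In particular, you correctly invoke the convention that the $\bv$-term is $0$ for the untwisted sector (so $\bv_{q(b_{\rho})}=0$ in the first case) and that $b_{\rho}$ is to be understood as $\beta^{\rig}(\bv_{\rho})\in N^{\rig}\subset N$ with trivial torsion component, so that $q(b_{\rho})$ is indeed the untwisted sector.
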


\begin{proof}
This is a direct consequence of the last lemma. 
\end{proof}

\section{The ``action'' of $\protect\J_{\infty}\protect\cT$ on $\protect\cJ_{\infty}\protect\cX$\label{sec:action}}

From \cite[Definition 3.1 and Theorem 7.24]{fantechi2010smoothtoric},
we have an action 
\[
\cT\times\cX\to\cX
\]
of the stacky torus $\cT$ on the toric stack $\cX$. 

Let $\J_{\infty}\cT$ denote the stack of (untwisted) arcs on $\cT$.
A $k$-point of $\J_{\infty}\cT$ corresponds to a morphism
\[
D:=\Spec k\tbrats\to\cT.
\]
Let $\cJ_{\infty}\cX$ denote the stack of twisted arcs on $\cX$.
A $k$-point of $\cJ_{\infty}\cX$ corresponds to a representable
morphism
\[
\cD^{l}:=[\Spec k\llbracket t^{1/l}\rrbracket/\mu_{l}]\to\cX.
\]

Let $\delta\colon D\to\cT$ be an arc and let $\gamma\colon\cD^{l}\to\cX$
be a twisted arc. Let $\delta\cdot\gamma\colon\cD^{l}\to\cX$ be the
twisted arc obtained as the composition
\[
\delta\cdot\gamma\colon\cD^{l}\to D\times\cD^{l}\to\cT\times\cX\to\cX.
\]
Let $\cX_{D}:=\cX\times D$ and $\cT_{D}:=\cT\times D$. The arc $\delta$
also defines a $\D$-automorphism $\alpha_{\delta}\colon\cX_{\D}\to\cX_{\D}$.
The twisted arc $\delta\cdot\gamma$ is isomorphic to $\alpha_{\delta}\circ\gamma$,
which follows from the following 2-commutative diagram.
\[
\xymatrix{\\\cD^{l}\ar[r]\ar[d]^{\gamma}\ar@/^{4pc}/[rrd]^{\delta\cdot\gamma} & D\times_{D}\cD^{l}\ar[d]^{(\delta,\gamma)}\\
\cX_{D}\ar[r]\ar[d]\ar@/_{1pc}/[rr]\sb(0.3){\alpha_{\delta}} & \cT_{D}\times_{D}\cX_{D}\ar[r]\ar[d] & \cX_{D}\ar[d]\\
D\ar[r]^{\delta}\ar@/_{1pc}/[rr]_{\id_{\D}} & \cT_{D}\ar[r] & D
}
\]
To see that $\delta\cdot\gamma$ is indeed a twisted arc, we need
to see that $\delta\cdot\gamma$ is a representable morphism. To show
this, we take a quasi-inverse arc $\delta^{-1}\colon D\to\cT$ of
$\delta$. The induced morphism $\alpha_{\delta^{-1}}$ is a quasi-inverse
to $\alpha_{\delta}$. This implies that $\alpha_{\delta}$ is representable,
which shows that $\delta\cdot\gamma=\alpha_{\delta}\circ\gamma$ is
also representable. 
\begin{lem}
\label{lem:intnum-same}For every $\rho\in\Sigma(1)$, $(\delta\cdot\gamma,\cE_{\rho})=(\gamma,\cE_{\rho})$.
\end{lem}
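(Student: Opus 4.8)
The plan is to reduce the equality of intersection numbers to a computation with the pull-back ideals $(\widetilde{\delta\cdot\gamma})^{-1}\cI_\rho$ and $(\widetilde\gamma)^{-1}\cI_\rho$ inside $k\llbracket t^{1/l}\rrbracket$, and to exploit the fact that $\delta\cdot\gamma = \alpha_\delta\circ\gamma$ where $\alpha_\delta$ is a $D$-automorphism of $\cX_D$. First I would note that since $\delta$ factors through $\cT$, the automorphism $\alpha_\delta\colon\cX_D\to\cX_D$ is toric over $D$, i.e. it is compatible with the open immersion $\cT_D\hookrightarrow\cX_D$ and permutes nothing combinatorially: in fact $\alpha_\delta$ preserves each torus-invariant closed substack $\cE_{\rho,D}=\cE_\rho\times D$, because the $\cT$-action preserves each $\cE_\rho$. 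Consequently $\alpha_\delta$ carries the ideal sheaf $\cI_{\rho,D}\subset\cO_{\cX_D}$ isomorphically onto itself; more precisely $\alpha_\delta^{-1}(\cI_\rho\cdot\cO_{\cX_D}) = \cI_\rho\cdot\cO_{\cX_D}$ as ideal sheaves on $\cX_D$.

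Next I would unwind the definition of $(\gamma,\cE_\rho)$. Recall $(\gamma,\cE_\rho)=n/l$ where $(\widetilde\gamma)^{-1}\cI_\rho = (t^{n/l})$, with $\widetilde\gamma\colon\Spec k\llbracket t^{1/l}\rrbracket\to\cX$ the morphism obtained from $\gamma$ by passing through the atlas $\Spec k\llbracket t^{1/l}\rrbracket\to\cD^l$. The morphism $\widetilde{\delta\cdot\gamma}$ is obtained from $\gamma$ by first composing with $\alpha_\delta$ (after base change to $D$, which for the arc $\widetilde\gamma$ amounts to the graph $\Spec k\llbracket t^{1/l}\rrbracket\to \Spec k\llbracket t^{1/l}\rrbracket\times_k D$ induced by $k\tbrats\to k\llbracket t^{1/l}\rrbracket$). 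Therefore
\[
(\widetilde{\delta\cdot\gamma})^{-1}\cI_\rho \;=\; (\widetilde\gamma)^{-1}\bigl(\alpha_\delta^{-1}(\cI_\rho\cdot\cO_{\cX_D})\bigr)\cdot k\llbracket t^{1/l}\rrbracket \;=\; (\widetilde\gamma)^{-1}(\cI_\rho\cdot\cO_{\cX_D})\cdot k\llbracket t^{1/l}\rrbracket \;=\; (\widetilde\gamma)^{-1}\cI_\rho,
\]
using the $\alpha_\delta$-invariance from the previous paragraph. Comparing the generators $(t^{n/l})$ gives $(\delta\cdot\gamma,\cE_\rho) = (\gamma,\cE_\rho)$.

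I expect the main obstacle to be making precise the claim that $\alpha_\delta$ preserves $\cI_\rho$ on the nose, rather than merely up to an automorphism of $\cX_D$. Concretely: on the chart $\cU(\sigma)=[\AA_k^d/G_\sigma]$ containing the relevant $d$-cone, with coordinates $x_1,\dots,x_d$ and $\cE_{\rho_i}=\{x_i=0\}$, the element $\delta\in\cT\langle k\tbrats\rangle$ acts by scaling $x_i\mapsto u_i(t)\,x_i$ for units $u_i(t)\in k\tbrats^\times$ (together with a possible permutation of the torus factors that is trivial here because $\delta$ lies in the connected torus part for the rigid coordinates, the torsion part acting through $B\mu_{l_i}$ and thus not affecting orders of vanishing of the $x_i$). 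Since $u_i(t)$ is a unit in $k\llbracket t^{1/l}\rrbracket$ as well, $\ord_t\bigl((\widetilde\gamma)^*(u_i x_i)\bigr)=\ord_t\bigl((\widetilde\gamma)^*x_i\bigr)$, which is exactly Lemma \ref{lem:intnum-same} in coordinates. The slightly delicate points are: (i) checking that the lift $\widetilde{\delta\cdot\gamma}$ can be chosen compatibly with the lift $\widetilde\gamma$ and the same embedding $\iota\colon\mu_l\hookrightarrow G_\sigma$ (this is fine because $\delta$ is untwisted, so it does not alter the $\mu_l$-action); and (ii) that the case $(\gamma,\cE_\rho)=\infty$, i.e. the generic point mapping into $\cE_\rho$, is preserved as well — but this too is immediate since $\alpha_\delta$ is an automorphism and hence $\widetilde\gamma$ factors through $\cE_{\rho}$ if and only if $\widetilde{\delta\cdot\gamma}$ does. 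Once these coordinate bookkeeping points are dispatched, the statement follows.
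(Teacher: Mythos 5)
Your argument is correct and is essentially the paper's own proof: you identify the same key point, namely that $\alpha_\delta$ preserves the torus-invariant divisor $\cE_{\rho,D}$ (hence its ideal sheaf), and deduce $(\delta\cdot\gamma)^{-1}\cI_\rho=\gamma^{-1}\cI_\rho$. The extra coordinate-level verification you sketch is not needed in the paper but only fleshes out the same reasoning.
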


\begin{proof}
Let $\cE_{\rho,D}\subset\cX_{D}$ be the closed substack derived from
$\cE_{\rho}$ by the base change. The isomorphism $\alpha_{\delta}$
preserves $\cE_{\rho,D}$. Thus, 
\[
(\delta\cdot\gamma)^{-1}\cI_{\cE_{\rho}}=\gamma^{-1}((\alpha_{\delta})^{-1}\cI_{\cE_{\rho,D}})=\gamma^{-1}\cI_{\cE_{\rho,D}}.
\]
This shows the lemma.
\end{proof}
\begin{lem}
\label{lem:sector-same}The twisted arcs $\delta\cdot\gamma$ and
$\gamma$ induce the same sector. Namely, $\cY_{\delta\cdot\gamma}=\cY_{\gamma}$.
\end{lem}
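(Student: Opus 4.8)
The plan is to reduce the statement to a computation with the residue map in terms of valuations, using the identification of sectors with elements of the box. Recall that the sector $\cY_{\delta\cdot\gamma}$ (resp. $\cY_{\gamma}$) is determined by the restriction of the twisted arc to the closed point, i.e. by the induced representable morphism $\B\mu_{l}\to\cX$ up to the connected component of $\cJ_{0}\cX$ it lands in. Since both $\delta\cdot\gamma$ and $\gamma$ have the same domain $\cD^{l}$ (with the same $\mu_{l}$-action), and since $\delta\cdot\gamma$ is obtained from $\gamma$ by composing with the $D$-automorphism $\alpha_{\delta}$ of $\cX_{D}$, the first observation is that on the level of the closed fiber the automorphism $\alpha_{\delta}$ restricts to an automorphism of $\cX$ over the closed point $\Spec k\hookrightarrow D$, namely the automorphism given by the action of the $k$-point $\delta(0)\in\cT\langle k\rangle$.

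Next I would note that the residue map $\psi\colon\cT\langle k\tpars\rangle\to\pi_{0}(\cJ_{0}\cX)$ was identified (via \cite[Lemmas 3.1.6 and 3.1.9]{darda2023themanin}) with $q\circ\log_{\cT}$, where $\log_{\cT}\colon\cT\langle k\tpars\rangle\to N$ records the orders of vanishing componentwise. For a general twisted arc $\gamma$ whose generic point lands in $\cT$, the sector $\cY_{\gamma}$ is computed the same way: it depends only on $\log_{\cT}$ of the corresponding $k\tpars$-point of $\cT$. Now $\delta\colon D\to\cT$ is an \emph{untwisted} arc, so $\log_{\cT}(\delta)$, computed from the orders of vanishing of the coordinate functions of $\delta$ at $t=0$, lies in the subgroup of $N$ coming from $\GG_{m}^{d}$-coordinates with \emph{non-negative} integer orders and with trivial $N_{\tor}$-component, because $\delta$ extends over the closed point of $D$ (it is an arc, not merely a $k\tpars$-point). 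Thus under the group law on $\cT\langle k\tpars\rangle$ we have $\log_{\cT}(\delta\cdot\gamma)=\log_{\cT}(\delta)+\log_{\cT}(\gamma)$, and $\log_{\cT}(\delta)\in N^{\rig}$ is an \emph{integral} lattice point with zero torsion part.

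The final step is to observe that adding an integral element $u_{0}\in N^{\rig}\subset N$ with trivial torsion component does not change the value of $q$: indeed, writing $\by=\sum_{\rho}a_{\rho}(\by)b_{\rho}$ and $u_{0}=\sum_{\rho}a_{\rho}(u_{0})b_{\rho}$ with all $a_{\rho}(u_{0})\in\ZZ_{\ge 0}$ (this integrality is exactly the statement that $u_{0}$ lies in the cone generated by the $b_{\rho}$ over $\ZZ_{\ge 0}$, which holds because $\Sigma$ is a fan and $u_0$ is a lattice point in some cone), the fractional parts $\{a_{\rho}(\by)+a_{\rho}(u_{0})\}=\{a_{\rho}(\by)\}$ are unchanged, so $q(\by+u_{0})=q(\by)$; and the $N_{\tor}$-component is likewise unchanged since $\delta$ contributes nothing there. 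Hence $\cY_{\delta\cdot\gamma}=q(\log_{\cT}(\delta\cdot\gamma))=q(\log_{\cT}(\gamma))=\cY_{\gamma}$.

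The main obstacle I anticipate is the bookkeeping around the case where the generic point of $\gamma$ does \emph{not} land in $\cT$ (so that some $(\gamma,\cE_{\rho})=\infty$ and $\log_{\cT}$ is not defined at $\gamma$); in that situation one cannot use the $q\circ\log_{\cT}$ description directly. The clean way around this is to argue more intrinsically: the sector associated to a twisted arc depends only on its restriction to the closed substack $\B\mu_{l}\hookrightarrow\cD^{l}$, and since $\delta\cdot\gamma=\alpha_{\delta}\circ\gamma$ with $\alpha_{\delta}$ an automorphism of $\cX_{D}$ over $D$ which restricts over $t=0$ to the action of $\delta(0)\in\cT\langle k\rangle$ on $\cX$, the two restrictions $\B\mu_{l}\to\cX$ differ by composing with an automorphism of $\cX$ coming from a $k$-point of $\cT$. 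Such an automorphism acts trivially on the set of connected components $\pi_{0}(\cJ_{0}\cX)$, since $\cT$ is connected and the $\cT$-action on $\cX$ therefore induces a trivial action on the discrete set $\pi_{0}(\cJ_{0}\cX)$ (the action factors through $\pi_{0}(\cT)=\{*\}$). Therefore $\cY_{\delta\cdot\gamma}=\cY_{\gamma}$ in full generality. I would present this connectedness argument as the primary proof, and mention the $\log_{\cT}$ computation as the explicit shadow of it.
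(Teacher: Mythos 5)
Your primary argument (the connectedness argument in the last paragraph) is essentially the paper's own proof: restrict to the closed point, observe that $\delta\cdot\gamma$ and $\gamma$ then differ by the automorphism of $\cX$ induced by the $k$-point $\delta(0)\in\cT\langle k\rangle$, and use connectedness of the torus to conclude that this automorphism fixes every connected component of $\cJ_0\cX$. The paper phrases the last step slightly more carefully by lifting $\delta(0)$ to a $k$-point of the atlas $T$ and arguing that the morphism $T\times\cJ_0\cX\to\cJ_0\cX$ maps each connected $T\times\cY$ into $\cY$, but the idea is the same, and you correctly identified that this intrinsic argument is the one that works without assuming the generic point of $\gamma$ lands in $\cT$.

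One caveat on your secondary $\log_\cT$ computation: since $\delta$ is an \emph{arc} $D\to\cT$, its $\GG_m^d$-coordinate functions are units in $k\tbrats$, so their orders of vanishing are exactly $0$ (not merely non-negative integers); hence $\log_\cT(\delta)=0$. The justification you offer for $a_\rho(u_0)\in\ZZ_{\ge 0}$ (that a lattice point in a cone lies in the $\ZZ_{\ge 0}$-span of the $b_\rho$) is false in general: a simplicial cone need not be smooth, and moreover $b_\rho=c_\rho w_\rho$ with $c_\rho>1$ possibly, so lattice points can have fractional $a_\rho$-coefficients. This is harmless here only because $\log_\cT(\delta)$ is actually $0$, but the stated reasoning would not support a nonzero $u_0$.
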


\begin{proof}
Let $\delta_{0}\colon\Spec k\to\cT$ and $\gamma_{0}\colon\B\mu_{l}\to\cX$
be the morphisms obtained by restricting $\delta$ and $\gamma$ to
the reduced closed substacks at the unique closed point of the source,
respectively. We get an automorphism $\alpha_{\delta_{0}}\colon\cX\to\cX$
obtained from $\delta_{0}$. We need to show that the two morphisms
$\alpha_{\delta_{0}}\circ\gamma_{0},\gamma_{0}\colon\B\mu_{l}\to\cX$
determines points in the same connected component of $\cJ_{0}\cX$.
Let $e\colon T\to\cT$ be the canonical morphism. We get an automorphism
$\alpha_{e}\colon\cX_{T}\to\cX_{T}$. This induces a morphism 
\[
T\times\cJ_{0}\cX\to\cJ_{0}\cX.
\]
From the construction, the composition 
\[
\cJ_{0}\cX\xrightarrow{(1_{T},\id_{\cJ_{0}\cX})}T\times\cJ_{0}\cX\to\cJ_{0}\cX
\]
is the identity morphism. Therefore, for each sector $\cY\subset\cJ_{0}\cX$,
since $T$ is connected and $\cY$ is geometrically connected, the
image of $T\times\cY$ is $\cY$ and get a morphism $T\times\cY\to\cY$.
Let $\widetilde{\delta_{0}}\colon\Spec k\to T$ be a lift of $\delta_{0}$.
If $\gamma_{0}$ corresponds to a point of $\cY$, then $\alpha_{\delta_{0}}\circ\gamma_{0}$
corresponds to
\[
\Spec k\xrightarrow{(\widetilde{\delta_{0}},\gamma_{0})}T\times\cY\to\cY.
\]
In particular, this is a point of the same sector $\cY$, as desired.
\end{proof}
\begin{cor}
\label{cor:orb-same}We have $[\delta\cdot\gamma]_{\orb}=[\gamma]_{\orb}$.
\end{cor}

\begin{proof}
This follows from Lemmas \ref{lem:intnum-same} and \ref{lem:sector-same}
and Definition \ref{def:orb-num-cl}.
\end{proof}
\begin{rem}
We guess that our construction of $\delta\cdot\gamma$ would be eventually
obtained from an action of $\J_{\infty}\cT$ on $\cJ_{\infty}\cX$
in the sense of \cite[Appendix B]{fantechi2010smoothtoric}. In this
paper, however, we have decided to content ourselves with showing
only properties of $\delta\cdot\gamma$ that we need in the proof
of the main result.
\end{rem}

\section{Orbifold pseudo-effective cones}
\begin{defn}
A \emph{covering family of stacky curves on} $\cX$ is the pair $(\pi\colon\widetilde{\cC}\to S,\widetilde{f}\colon\widetilde{\cC}\to\cX)$
of a smooth morphism $\pi\colon\widetilde{\cC}\to S$ of DM stacks
with $T$ an irreducible $k$-variety and a representable dominant
morphism $\widetilde{f}\colon\widetilde{\cC}\to\cX$ such that for
every $s\in S(k)$, the morphism
\[
\widetilde{f}|_{\pi^{-1}(s)}\colon\pi^{-1}(s)\to\cX
\]
is a stacky curve on $\cX$. 
\end{defn}

\begin{defn}
For a covering family $(\pi\colon\widetilde{\cC}\to S,\widetilde{f}\colon\widetilde{\cC}\to\cX)$
of stacky curves on $\cX$, we define its \emph{orbifold numerical
class, }denoted by $[\widetilde{f}]_{\orb}$, to be $[\widetilde{f}|_{\pi^{-1}(s)}]$
for a general point $s\in S(k)$ (which is well-defined thanks to
\cite[Lemma 8.6]{darda2024thebatyrevtextendashmanin}).
\end{defn}

Let $f\colon\cC\to\cX$ be a stacky curve. From (\ref{eq:Ker-orb}),
$[f]_{\orb}$ lies in $\Ker(\beta_{\orb}')$ and hence is regarded
as an element of $\N_{1,\orb}(\cX)$.
\begin{defn}
We define a cone $\Mov_{1,\orb}(\cX)\subset\N_{1,\orb}(\cX)$ to be
the closure of the cone generated by the classes $[\widetilde{f}]_{\orb}$
of all the covering families $(\pi\colon\widetilde{\cC}\to S,\widetilde{f}\colon\widetilde{\cC}\to\cX)$
of stacky curves on $\cX$. We define the \emph{orbifold pseudo-effective
cone }$\PEff_{\orb}(\cX)\subset\N_{\orb}^{1}(\cX)$ to be the dual
cone of $\Mov_{1,\orb}(\cX)$.
\end{defn}

\begin{lem}
The cone $\Mov_{1,\orb}(\cX)\subset\N_{1,\orb}(\cX)$ is the closure
of the cone generated by by the classes $[f]_{\orb}$ of those stacky
curves $f\colon\cC\to\cX$ that map the generic point into $\cT$.
\end{lem}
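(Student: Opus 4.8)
The plan is to show that allowing arbitrary stacky curves (not just those whose generic point maps into $\cT$) does not enlarge the cone, so that the two closed cones coincide; the inclusion $\supseteq$ is automatic since curves meeting $\cT$ form a subclass of all curves, and the content is the reverse inclusion. Let $f\colon\cC\to\cX$ be an arbitrary stacky curve. The idea, following Payne \cite{payne2006stablebase}, is to deform $f$ inside a covering family to a stacky curve whose generic point lands in the stacky torus $\cT$, while keeping the orbifold numerical class under control. First I would pick a general $k$-point $z_0\in\cC$ and note that $f(z_0)$ lies in some torus-invariant stratum; using the $\cT$-action on $\cX$ and a generic $1$-parameter family of translates $t\cdot f$ (for $t$ ranging over $\cT$, or over an arc as in Section \ref{sec:action}), one builds a covering family $(\widetilde{\cC}\to S,\widetilde{f})$ whose total space dominates $\cX$ and whose general fiber is a translate of $\cC$. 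Since translation by an element of the dense torus moves the generic point of $\cC$ into $\cT$ (as $\cT$ is dense and the generic point of $\cC$ is not contained in any proper torus-invariant closed substack after a general translation), the general fiber of this family is a stacky curve mapping the generic point into $\cT$.

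The key point is then to compare orbifold numerical classes. By \cite[Lemma 8.6]{darda2024thebatyrevtextendashmanin}, the orbifold numerical class is constant in a covering family, so $[\widetilde{f}|_{\pi^{-1}(s)}]_{\orb}$ is independent of general $s$; on the other hand the original $f$ is (a translate of) a special member. The cleanest route is: the class $[f]_{\orb}$ is computed from intersection numbers with the $\cE_\rho$ and from the sectors at stacky points, both of which are deformation invariants of the family by the cited lemma, so $[f]_{\orb}=[\widetilde{f}]_{\orb}$ equals the class of a general fiber, which is a curve through $\cT$. Hence $[f]_{\orb}$ already lies in the cone generated by classes of curves meeting $\cT$. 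Taking closures, $\Mov_{1,\orb}(\cX)$ is contained in — and therefore equal to — the closure of the cone generated by classes $[f]_{\orb}$ of stacky curves mapping the generic point into $\cT$.

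The main obstacle is constructing the covering family rigorously in the stacky setting: one must produce a smooth morphism $\pi\colon\widetilde{\cC}\to S$ with $S$ an irreducible variety, a representable dominant $\widetilde{f}\colon\widetilde{\cC}\to\cX$, and a verification that \emph{every} fiber (not merely the general one) is a legitimate stacky curve — in particular irreducible, smooth, proper with trivial generic stabilizer, and that $\widetilde{f}$ restricted to each fiber is representable. The natural candidate is $S=T$ (or an open subvariety of it) with $\widetilde{\cC}=T\times\cC$ and $\widetilde{f}(t,x)=t\cdot f(x)$, using the $\cT$-action of Section \ref{sec:action}; representability of each translate follows exactly as in that section, from the fact that translation by $t$ is an automorphism of $\cX$, and the required genericity — that for general $t$ the generic point of $\{t\}\times\cC$ maps into $\cT$ — holds because the locus of $t$ for which $f(\cC)\subset t^{-1}\cdot(\cX\setminus\cT)$ is a proper closed subset of $T$ (here one uses that $f$ is non-constant, or handles the constant case separately by a trivial direct argument). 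Once this family is in hand, invoking \cite[Lemma 8.6]{darda2024thebatyrevtextendashmanin} finishes the proof.
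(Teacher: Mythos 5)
Your proposal starts from a misreading of the definition of $\Mov_{1,\orb}(\cX)$. In the paper this cone is the closure of the cone generated by classes of \emph{covering families} of stacky curves, not by classes of arbitrary individual stacky curves. You instead compare ``all stacky curves'' against ``stacky curves through $\cT$'' and declare one inclusion automatic, but that is not what the lemma asks for, and the statement you set out to prove is in fact false.

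The concrete failure is in your genericity claim. You assert that for an arbitrary stacky curve $f\colon\cC\to\cX$ the locus of $t\in T$ with $f(\cC)\subset t^{-1}\cdot(\cX\setminus\cT)$ is a proper closed subset of $T$. This breaks whenever $f$ maps $\cC$ into a torus-invariant closed substack, say into $\cE_{\rho}$: since $\cE_{\rho}$ is $T$-invariant, $t\cdot f$ maps into $\cE_{\rho}$ for \emph{every} $t$, so no translate hits $\cT$ at the generic point. Such curves exist (e.g.\ the negative section on a Hirzebruch surface), and their classes need not lie in $\Mov_{1}$; so if the translation argument applied to them it would ``prove'' too much. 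This is precisely why the paper defines $\Mov_{1,\orb}$ via covering families rather than via arbitrary curves.

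With the correct definition the two directions are both short, and the machinery you invoke (the translation family $T\times\cC\to\cX$ from Section \ref{sec:action}, and invariance of the class across fibers) is exactly what the paper uses. In one direction: for any covering family $(\pi\colon\widetilde{\cC}\to S,\widetilde{f})$, since $\widetilde{f}$ is dominant, a general fiber of $\pi$ meets $\widetilde{f}^{-1}(\cT)$, so the general fiber is a stacky curve through $\cT$; this identifies the covering-family class with a class of a curve through $\cT$. In the other direction: given a curve $f$ \emph{already} through $\cT$, the family $\pi\colon T\times\cC\to T$, $\widetilde{f}(t,x)=t\cdot f(x)$, is a covering family (no genericity needed, since translates are automorphisms), and by Proposition \ref{prop:orb-pointwise} and Corollary \ref{cor:orb-same} one gets $[\widetilde{f}|_{\pi^{-1}(t)}]_{\orb}=[f]_{\orb}$ for \emph{every} $t\in T(k)$ — there is no need to appeal to constancy-in-families lemmas or to treat $f$ as a special member of a family, since the equality is proved pointwise. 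Your construction of the family is correct; what is missing is the realization that you only need it for curves already mapping the generic point into $\cT$, which removes the problematic genericity step entirely.
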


\begin{proof}
It is enough to show that the two sets of orbifold numerical classes
are identical; the orbifold numerical classes of covering families
of stacky curves and the ones of stacky curves mapping the generic
point into $\cT$. If $(\pi\colon\widetilde{\cC}\to S,\widetilde{f}\colon\widetilde{\cC}\to\cX)$
is a covering family of stacky curves on $\cX$, then for a general
point $s\in S(k)$, $\widetilde{f}|_{\pi^{-1}(s)}$ is a stacky curve
which maps the generic point into $\cT$. Conversely, for a stacky
curve $f\colon\cC\to\cX$ which maps the generic point into $\cT$,
we consider the composite morphism
\[
\widetilde{f}\colon T\times\cC\to T\times\cX\to\cT\times\cX\to\cX.
\]
Here $T\times\cX\to\cT\times\cX$ is the product of the standard atlas
$T\to\cT$ and the identity morphism $\id_{\cX}$, and $\cT\times\cX\to\cX$
is the action of $\cT$ on $\cX$. The pair of the projection $\pi\colon T\times\cC\to T$
and $\widetilde{f}$ is a covering family. From Proposition \ref{prop:orb-pointwise}
and Corollary \ref{cor:orb-same}, for every $t\in T(k)$, $[\widetilde{f}|_{\pi^{-1}(t)}]_{\orb}=[f]_{\orb}$,
which completes the proof.
\end{proof}
\begin{thm}
\label{thm:main}The cone $\Mov_{1,\orb}(\cX)$ is generated by the
image $\overline{\Xi_{\eta}}$ of $\Xi_{\eta}$, $\eta\in\Sigma(1)\cup\pi_{0}^{*}(\cJ_{0}\cX)$:
\[
\Mov_{1,\orb}(\cX)=\N_{1,\orb}(\cX)\cap\sum_{\eta\in\Sigma(1)\cup\pi_{0}^{*}(\cJ_{0}\cX)}\RR_{\ge0}\Xi_{\rho}.
\]
\end{thm}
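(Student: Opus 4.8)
The plan is to prove the two inclusions separately. For the inclusion $\Mov_{1,\orb}(\cX)\subseteq\N_{1,\orb}(\cX)\cap\sum_{\eta}\RR_{\ge0}\Xi_{\eta}$, I would invoke the previous lemma to reduce to stacky curves $f\colon\cC\to\cX$ mapping the generic point into $\cT$, and then apply Corollary \ref{cor:orb-class}, which already shows $[f]_{\orb}\in\sum_{\eta}\ZZ_{\ge0}\Xi_{\eta}$. Since each $[f]_{\orb}$ also lies in $\N_{1,\orb}(\cX)=\Ker(\beta'_{\orb})$ by (\ref{eq:Ker-orb}), and the right-hand cone is closed (being finitely generated, intersected with a subspace), taking the closure of the cone generated by such classes stays inside it. This direction is essentially already done by the accumulated lemmas.

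The substantive direction is the reverse inclusion: every $\overline{\Xi_{\eta}}$ must be realized, up to positive scaling and in the closure, as the orbifold numerical class of a covering family of stacky curves mapping the generic point into $\cT$. By the previous lemma it suffices to hit each $\Xi_{\eta}$ (or a positive multiple) by such a stacky curve. Corollary \ref{cor:Xi} already produces, for each $\eta$, a $k\tpars$-point $\widehat{\phi_{b_\eta}}\colon\Spec k\tpars\to\cT$ with $[\widehat{\phi_{b_\eta}}]_{\orb}=\Xi_{\eta}$; the task is to upgrade this $k\tpars$-point to an honest stacky curve $\cC\to\cX$ over $k$ realizing the same class. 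The natural approach, following Payne's argument in \cite{payne2006stablebase}, is: take a one-parameter subgroup $\phi_{b_\eta}\colon\GG_m\to\cT\hookrightarrow\cX$, compactify the source $\GG_m$ to $\PP^1$ (or an appropriate stacky $\PP^1$, inserting a $\mu_{l}$-gerbe structure at $0$ and $\infty$ to accommodate the torsion/box data encoded in $b_\eta$), and take the closure of the image in $\cX$, normalizing if necessary to get a smooth stacky curve with trivial generic stabilizer. The orbifold numerical class of the resulting stacky curve, computed via Proposition \ref{prop:orb-pointwise} by summing local contributions $[f_z]_{\orb}$ over the finitely many boundary points $z$, will be $[\widehat{\phi_{b_\eta}}]_{\orb}$ plus correction terms coming from the point at infinity. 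To kill the unwanted corrections, one combines several one-parameter subgroups: by the $\cT$-action (Corollary \ref{cor:orb-same}) one may translate the arc germs independently at $0$ and at $\infty$ without changing local classes, and by choosing $b$ and the translating arc so that the germ at $\infty$ lands in a cone whose defining data contributes only terms of the form $\ZZ_{\ge0}\Xi_\rho$, one arranges that the total class is $\Xi_\eta + \sum_\rho n_\rho \Xi_\rho$ with $n_\rho\ge0$; taking closures and using that the $\Xi_\rho$ are themselves in the cone then lets one recover $\overline{\Xi_\eta}$ in the span. Crucially, because we are allowed to pass to the closure of the generated cone, it is enough to realize, for each $\eta$, a sequence (or a single family) of classes whose positive-real span accumulates onto $\RR_{\ge0}\overline{\Xi_\eta}$ modulo the already-available rays $\RR_{\ge0}\overline{\Xi_\rho}$.

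The main obstacle, as the authors themselves flag in the introduction, is the construction of the covering family of stacky curves with the prescribed orbifold numerical class — concretely, controlling the local class $[f_z]_{\orb}$ at the \emph{second} (and any further) boundary point of the compactified one-parameter subgroup. At $z=0$ the germ is exactly $\widehat{\phi_{b_\eta}}$ by construction, but at $z=\infty$ the germ is governed by $-b_\eta$ (in suitable coordinates), which lies in some other maximal cone $\sigma'$ of $\Sigma$, and its local class $\sum_{i}a'_i\bb_{\rho'_i}+\bv_{\cY'}$ need not be of the desired form. The fix is to build the source as a stacky chain of $\PP^1$'s (or to use a sufficiently divisible one-parameter subgroup together with an auxiliary translating arc at $\infty$) so that the germ at the far end is forced into a cone where the box element $\cY'$ is trivial and all $a'_i$ are integers — making that local contribution lie in $\sum_\rho\ZZ_{\ge0}\Xi_\rho$ — while still keeping representability (checked as in Section \ref{sec:action} via a quasi-inverse arc) and smoothness of the total space of the family. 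Once such a family is produced for every $\eta$, together with the families realizing the pure rays $\Xi_\rho$ (via ordinary non-stacky one-parameter subgroups, where the classical toric computation applies), the generated cone contains all the $\overline{\Xi_\eta}$, and its closure intersected with $\N_{1,\orb}(\cX)$ gives the claimed equality; the dual statement is Theorem \ref{thm:main-intro}.
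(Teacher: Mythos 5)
Your treatment of the easy inclusion is correct and matches the paper. The difficulty lies in your reduction for the reverse inclusion, which contains a genuine gap: you write that ``it suffices to hit each $\Xi_\eta$ (or a positive multiple) by such a stacky curve,'' but no stacky curve can have orbifold numerical class $\Xi_\eta$, because $\Xi_\eta\notin\N_{1,\orb}(\cX)=\Ker(\beta'_{\orb})$. Indeed $\beta'_{\orb}(\Xi_\rho)=c_\rho w_\rho=b_\rho\neq 0$ and $\beta'_{\orb}(\Xi_\cY)=\sum_\rho a_\rho(\cY)b_\rho\neq 0$. The rays $\RR_{\ge 0}\Xi_\eta$ are generators of the ambient simplicial cone in $V_{\orb}$, but the cone you must generate lives in the proper subspace $\N_{1,\orb}(\cX)$, whose extremal rays are (non-trivial) linear combinations of the $\Xi_\eta$; the auxiliary families ``realizing the pure rays $\Xi_\rho$'' that you invoke at the end do not exist. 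Likewise, ``using that the $\Xi_\rho$ are themselves in the cone'' to absorb the extra contribution at $\infty$ is not available, and in any case cones are not closed under subtraction, so realizing $\Xi_\eta+\sum n_\rho\Xi_\rho$ does not give you $\Xi_\eta$.

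The correct move, which is the heart of the paper's argument, is to realize the whole $\ZZ_{\ge 0}$-combination $\sum_{i=1}^{r}\Xi_{\eta_i}$ at once, with $\eta_1,\dots,\eta_r$ repeated with multiplicity, precisely when this sum lies in $\N_{1,\orb}(\cX)$. That membership condition is equivalent to $\sum_i b_{\eta_i}=0$, and this cancellation is exactly what makes the map $\Phi(z)=\prod_i\phi_{b_{\eta_i}}(z-\lambda_i)$ extend over $z=\infty$ with $\Phi(\infty)=1_T$, so that $\infty$ is a \emph{torus} point contributing nothing. There is no need for stacky chains of $\PP^1$'s or auxiliary translating arcs to kill a boundary contribution at the far end --- the constraint coming from membership in $\N_{1,\orb}(\cX)$ does all the work. (Note that translating by an arc cannot help here in the way you suggest: by Corollary \ref{cor:orb-same} it leaves the local class unchanged.) The rest of the computation is then as you outline: at each $z=\lambda_i$ the germ is $\widehat{\phi_{b_{\eta_i}}}$ composed with translations by the other factors, so Corollaries \ref{cor:Xi} and \ref{cor:orb-same} give $[\Phi_{\lambda_i}]_{\orb}=\Xi_{\eta_i}$, and Proposition \ref{prop:orb-pointwise} sums these to $\sum_i\Xi_{\eta_i}$. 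Since the $\ZZ_{\ge 0}$-combinations in $\N_{1,\orb}(\cX)$ generate a cone whose closure is the intersection on the right-hand side, this finishes the proof.
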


\begin{proof}
Our proof is a variation of the proof of \cite[Proposition 2]{payne2006stablebase}.
From Corollary \ref{cor:orb-class}, the left hand side is included
in the right hand side. To show the opposite inclusion, we take an
arbitrary element of 
\[
\sum_{i=1}^{r}\Xi_{\eta_{i}}\in\sum_{\eta\in\Sigma(1)\cup\pi_{0}^{*}(\cJ_{0}\cX)}\ZZ_{\ge0}\Xi_{\eta}\cap\N_{1,\orb}(\cX),
\]
where $\eta_{1},\dots,\eta_{r}$ is a sequence of elements of $\Sigma(1)\cup\pi_{0}^{*}(\cJ_{0}\cX)$.
We will show that $\sum_{i=1}^{r}\Xi_{\eta_{i}}$ is an element of
$\Mov_{1,\orb}(\cX)$.

Let $\lambda_{1},\dots,\lambda_{r}\in k^{*}$ be distinct elements
and consider the morphism
\[
\Phi(z)=\prod_{i}\phi_{b_{\eta_{i}}}(z-\lambda_{i})\colon\GG_{m}\setminus\{\lambda_{1},\dots,\lambda_{r}\}\to\cT.
\]
Here the product is taken with respect to the group structure of $\cT$.
Let $f\colon\cC\to\cX$ be the stacky curve uniquely extending $\Phi$.
We identify $\cC\langle k\rangle$ with $\PP^{1}\langle k\rangle=k\cup\{\infty\}$.
\begin{claim}
\label{claim:1}The point $\infty\in\cC\langle k\rangle$ is not a
stacky point. Moreover, the image of $\infty$ by $\Phi$ in $\cT\langle k\rangle=T\langle k\rangle$
is the identity element $1_{T}$ for the group structure of $T\langle k\rangle$.
\end{claim}

\begin{proof}[Proof of Claim \ref{claim:1}]
We first prove the second assertion. From (\ref{eq:Ker-orb}), we
have $\sum b_{\eta_{i}}=0$. Since $\Psi(z)=\prod\phi_{b_{\eta_{i}}}(z)$
is the trivial morphism $1\colon\GG_{m}\to\cY$, we have
\begin{align*}
\Phi(\infty) & =\lim_{z\to\infty}\prod_{i}\phi_{b_{\eta_{i}}}(z-\lambda_{i})\\
 & =\lim_{z\to\infty}\prod_{i}\phi_{b_{\eta_{i}}}(z-\lambda_{i})/\phi_{b_{\eta_{i}}}(z)\\
 & =\lim_{z\to\infty}\prod_{i}\phi_{b_{\eta_{i}}}(1-\lambda_{i}/z)\\
 & =\prod_{i}\phi_{b_{\eta_{i}}}(1)\\
 & =1_{T},
\end{align*}
as desired. Considering the affine line $\AA^{1}$ with coordinate
$w=1/z$, we have the morphism 
\[
\prod_{i}\phi_{b_{\eta_{i}}}(1-\lambda_{i}w)\colon\AA^{1}\setminus\{\lambda_{1}^{-1},\dots,\lambda_{r}^{-1}\}\to\cT,
\]
which corresponds to $\Phi$ via the coordinate change $w=1/z.$ This
shows that the point $z=\infty$ (corresponding to $w=0$) is not
a stacky point of $\cC$. 
\end{proof}
\begin{claim}
\label{claim:2}For $i\in\{1,\dots,r\}$, we have $[\Phi_{\lambda_{i}}]_{\orb}=\Xi_{\eta_{i}}$.
\end{claim}

\begin{proof}[Proof of Claim \ref{claim:2}]
For the parameter $t:=z-\lambda_{i}$, $\phi_{b_{\eta_{i}}}$ induces
the $k\tpars$-point $\widehat{\phi_{b_{\eta_{i}}}}$ of $\cT$ with
the induced twisted arc $\gamma_{i}\colon\cD^{l}\to\cX$ and $\phi_{b_{\eta_{j}}}$
with $j\ne i$ induce $k\tpars$-points of $\cT$ which extend to
arcs $\delta_{j}\colon D\to\cT$. We have 
\[
\Phi_{\lambda_{i}}=\delta_{1}\cdot\cdots\cdot\hat{\delta_{i}}\cdot\cdots\delta_{r}\cdot\gamma_{i}.
\]
From Corollary \ref{cor:Xi}, we have $\left[\gamma_{i}\right]_{\orb}=\Xi_{\eta_{i}}$.
From Corollary \ref{cor:orb-same}, $[\Phi_{\lambda_{i}}]_{\orb}=\Xi_{\eta_{i}}$,
as desired.
\end{proof}
Claims \ref{claim:1} and \ref{claim:2} show 
\[
[\Phi]_{\orb}=\sum_{i=1}^{r}[\Phi_{\lambda_{i}}]_{\orb}=\sum_{i=1}^{r}\Xi_{\eta_{i}}.
\]
We have completed the proof of Theorem \ref{thm:main}.
\end{proof}
\begin{cor}
\label{cor:main}For $\eta\in\Sigma(1)\cup\pi_{0}^{*}(\cJ_{0}\cX)$,
let $\{\Xi_{\eta}\ge0\}$ denote the half-space $\{\theta\in U_{\orb}\mid(\theta,\Xi_{\eta})\ge0\}$
in $U_{\orb}$. Then, the orbifold pseudo-effective cone $\PEff_{\orb}(\cX)$
is the image of the cone 
\[
\bigcap_{\eta\in\Sigma(1)\cup\pi_{0}^{*}(\cJ_{0}\cX)}\{\Xi_{\eta}\ge0\}\subset V_{\orb}
\]
by the map $\lambda_{\orb}\colon V_{\orb}\to\N_{\orb}^{1}(\cX)$.
Moreover, it is generated by the $\#(\Sigma(1)\cup\pi_{1}^{*}(\cJ_{0}\cX))$
elements
\[
[\cE_{\rho}]-\sum_{\cY\in\pi_{0}^{*}(\cJ_{0}\cY)}a_{\rho}(\cY)u_{\cY}\quad(\rho\in\Sigma(1))
\]
and 
\[
u_{\cY}\quad(\cY\in\pi_{0}^{*}(\cJ_{0}\cX)).
\]
\end{cor}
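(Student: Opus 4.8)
The plan is to deduce the corollary purely formally from Theorem \ref{thm:main} by dualizing. First I would note that, by definition, $\PEff_{\orb}(\cX)$ is the dual cone of $\Mov_{1,\orb}(\cX)$ inside $\N_{\orb}^{1}(\cX)$, where the pairing is the component-wise intersection pairing identifying $\N_{\orb}^{1}(\cX)$ and $\N_{1,\orb}(\cX)$ as dual spaces. By Theorem \ref{thm:main}, $\Mov_{1,\orb}(\cX)=\N_{1,\orb}(\cX)\cap\sum_{\eta}\RR_{\ge0}\Xi_{\eta}$, i.e. it is the image under the surjection $\mu_{\orb}\colon$ (or rather the intersection with the subspace $\Ker(\beta_{\orb}')=\N_{1,\orb}(\cX)$) of the cone $\sum_{\eta}\RR_{\ge0}\bigl(\text{something mapping to }\Xi_{\eta}\bigr)$. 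Here I would use the exact sequence $0\to\N_{1,\orb}(\cX)\xrightarrow{\mu_{\orb}}V_{\orb}\xrightarrow{\beta'_{\orb}}N_{\RR}\to0$ and its dual $0\to M_{\RR}\xrightarrow{\alpha_{\orb}}U_{\orb}\xrightarrow{\lambda_{\orb}}\N_{\orb}^{1}(\cX)\to0$.

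The key step is the standard duality: if $C\subset V_{\orb}$ is a cone and we intersect it with a subspace $W=\Ker(\beta_{\orb}')$, then the dual cone of $W\cap C$ inside $W^{*}=V_{\orb}^{*}/W^{\perp}=U_{\orb}/\alpha_{\orb}(M_{\RR})=\N_{\orb}^{1}(\cX)$ is the image under $\lambda_{\orb}$ of the dual cone $C^{\vee}\subset U_{\orb}$ of $C$. Applied to $C=\sum_{\eta}\RR_{\ge0}\Xi_{\eta}$, whose dual cone is exactly $C^{\vee}=\bigcap_{\eta}\{\theta\in U_{\orb}\mid(\theta,\Xi_{\eta})\ge0\}=\bigcap_{\eta}\{\Xi_{\eta}\ge0\}$, this gives the first assertion: $\PEff_{\orb}(\cX)=\lambda_{\orb}\bigl(\bigcap_{\eta}\{\Xi_{\eta}\ge0\}\bigr)$. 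One subtlety I would be careful about is that the dualization-and-intersection identity requires the intersection $W\cap C$ to have nonempty relative interior, or one argues at the level of closures; since $\Mov_{1,\orb}(\cX)$ is by definition a closed cone and the $\Xi_{\cY}$ already lie in $\N_{1,\orb}(\cX)$, this technical point is harmless, but it is where I would spend a line or two.

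For the second assertion, I would compute the generators of the image cone explicitly. A linear functional $\theta\in U_{\orb}$, written $\theta=\sum_{\rho}x_{\rho}\bu_{\rho}+\sum_{\cY}y_{\cY}\bu_{\cY}$, satisfies $(\theta,\Xi_{\rho})=(\theta,\bb_{\rho})=c_{\rho}x_{\rho}\ge0$ for $\rho\in\Sigma(1)$, which cuts out $x_{\rho}\ge0$; and $(\theta,\Xi_{\cY})=(\theta,\bv_{\cY}+\sum_{\rho}a_{\rho}(\cY)\bb_{\rho})=y_{\cY}+\sum_{\rho}c_{\rho}a_{\rho}(\cY)x_{\rho}\ge0$. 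After applying $\lambda_{\orb}$, the $x_{\rho}$-coordinate $\bu_{\rho}$ maps to $[E_{\rho}]=c_{\rho}[\cE_{\rho}]$ — here I would track the factor $c_{\rho}$ carefully, since $\lambda$ is defined by $\bu_{\rho}\mapsto[E_{\rho}]$ while the statement is phrased in terms of $[\cE_{\rho}]$ — and the $\bu_{\cY}$ maps to $u_{\cY}$. The cone $\{x_{\rho}\ge0,\ y_{\cY}+\sum_{\rho}c_{\rho}a_{\rho}(\cY)x_{\rho}\ge0\}$ in the $(x_{\rho},y_{\cY})$-coordinates is simplicial with extremal rays obtained by setting all but one inequality to equality; these rays are the rays spanned by $\bu_{\rho}-\sum_{\cY}c_{\rho}a_{\rho}(\cY)\bu_{\cY}$ (one for each $\rho$) and by $\bu_{\cY}$ (one for each $\cY$). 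Pushing forward by $\lambda_{\orb}$ and rescaling $[E_{\rho}]=c_{\rho}[\cE_{\rho}]$ yields precisely the generators $[\cE_{\rho}]-\sum_{\cY}a_{\rho}(\cY)u_{\cY}$ and $u_{\cY}$ claimed in the statement. The main obstacle — really the only nontrivial point — is the bookkeeping with the constants $c_{\rho}$ and making sure the relation between $\bb_{\rho}=c_{\rho}\bv_{\rho}$, $[E_{\rho}]=c_{\rho}[\cE_{\rho}]$, and the definition of $a_{\rho}(\cY)$ is consistently applied so that the $c_{\rho}$'s cancel; everything else is routine polyhedral duality.
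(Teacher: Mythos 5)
Your proof is correct and follows essentially the same route as the paper: both deduce the corollary from Theorem \ref{thm:main} by polyhedral duality, first identifying $\PEff_{\orb}(\cX)$ with $\lambda_{\orb}\bigl(\bigcap_{\eta}\{\Xi_{\eta}\ge0\}\bigr)$ and then computing generators. Your coordinate computation of the extremal rays $\bu_{\rho}-\sum_{\cY}c_{\rho}a_{\rho}(\cY)\bu_{\cY}$ and $\bu_{\cY}$ is, up to the harmless rescaling by $c_{\rho}$, the same as the paper's observation that $(\Xi_{\eta}^{*})_{\eta}$ is the dual basis to $(\Xi_{\eta})_{\eta}$.
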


\begin{proof}
Recall that the cone $\PEff_{\orb}(\cX)$ is the dual cone of $\Mov_{1,\orb}(\cX)$.
If we regard $\Mov_{1,\orb}(\cX)$ as a cone in $V_{\orb}$, then
it is the intersection of the two cones
\[
\N_{1,\orb}(\cX)\text{ and }\sum_{\eta\in\Sigma(1)\cup\pi_{0}^{*}(\cJ_{0}\cX)}\RR_{\ge0}\Xi_{\rho}.
\]
Thus, its dual cone in $U_{\orb}$ is 
\[
\N_{1,\orb}(\cX)^{\vee}+\left(\sum_{\eta\in\Sigma(1)\cup\pi_{0}^{*}(\cJ_{0}\cX)}\RR_{\ge0}\Xi_{\rho}\right)^{\vee}.
\]
This shows that
\begin{align*}
\PEff_{\orb}(\cX) & =\lambda_{\orb}\left(\N_{1,\orb}(\cX)^{\vee}+\left(\sum_{\eta\in\Sigma(1)\cup\pi_{0}^{*}(\cJ_{0}\cX)}\RR_{\ge0}\Xi_{\rho}\right)^{\vee}\right)\\
 & =\lambda_{\orb}\left(\left(\sum_{\eta\in\Sigma(1)\cup\pi_{0}^{*}(\cJ_{0}\cX)}\RR_{\ge0}\Xi_{\rho}\right)^{\vee}\right)\\
 & \lambda_{\orb}\left(\bigcap_{\eta\in\Sigma(1)\cup\pi_{0}^{*}(\cJ_{0}\cX)}\{\Xi_{\eta}\ge0\}\right).
\end{align*}
We have proved the first assertion.

To show the second assertion, we define $\Xi_{\eta}^{*}\in U_{\orb}$,
$\eta\in\Sigma(1)\cup\pi_{0}^{*}(\cJ_{0}\cX)$ as follows:
\[
\Xi_{\eta}^{*}:=\begin{cases}
c_{\rho}^{-1}\bu_{\rho}-\sum_{\cY\in\pi_{0}^{*}(\cJ_{0}\cY)}a_{\rho}(\cY)\bu_{\cY} & (\eta=\rho\in\Sigma(1))\\
\bu_{\cY} & (\eta=\cY\in\pi_{0}^{*}(\cJ_{0}\cX)).
\end{cases}
\]
We see that $(\Xi_{\eta})_{\eta\in\Sigma(1)\cup\pi_{0}^{*}(\cJ_{0}\cX)}$
is a basis of $V_{\orb}$ and $(\Xi_{\eta}^{*})_{\eta\in\Sigma(1)\cup\pi_{0}^{*}(\cJ_{0}\cX)}$
is its dual basis. It follows that $\sum_{\eta}\RR_{\ge0}\Xi_{\eta}$
is a full-dimensional simplicial cone in $V_{\orb}$ and that its
dual cone $\bigcap_{\eta\in\Sigma(1)\cup\pi_{0}^{*}(\cJ_{0}\cX)}\{\Xi_{\eta}\ge0\}$
is identical to $\sum_{\eta}\RR_{\ge0}\Xi_{\eta}^{*}$. Since $\lambda_{\orb}(\bu_{\cY})=u_{\cY}$
and $\lambda_{\orb}(\bu_{\rho})=[E_{\rho}]=c_{\rho}[\cE_{\rho}]$,
we have
\[
\lambda_{\orb}(\Xi_{\eta}^{*})=\begin{cases}
[\cE_{\rho}]-\sum_{\cY\in\pi_{0}^{*}(\cJ_{0}\cY)}a_{\rho}(\cY)u_{\cY} & (\eta=\rho\in\Sigma(1))\\
u_{\cY} & (\eta=\cY\in\pi_{0}^{*}(\cJ_{0}\cX)).
\end{cases}
\]
This shows the second assertion.
\end{proof}
\bibliographystyle{alphaurl}
\bibliography{Eff-Toric}

\end{document}